\documentclass[11pt]{article}
\usepackage[margin=1in]{geometry}
\usepackage{amsmath,amssymb,amsthm,mathtools,mathrsfs}
\usepackage{enumitem}
\usepackage{algorithm}
\usepackage{algpseudocode}
\usepackage{booktabs}
\usepackage{hyperref}
\usepackage{xcolor}
\usepackage{microtype}
\usepackage{verbatim}
\usepackage{authblk}
\hypersetup{colorlinks=true, linkcolor=blue!50!black, citecolor=blue!50!black, urlcolor=blue!50!black}

\newtheorem{theorem}{Theorem}[section]
\newtheorem{proposition}[theorem]{Proposition}
\newtheorem{lemma}[theorem]{Lemma}
\newtheorem{corollary}[theorem]{Corollary}

\theoremstyle{remark}

\numberwithin{equation}{section}

\newcommand{\eps}{\varepsilon}

\newcommand{\cH}{\mathcal H}

\newcommand{\cR}{\mathcal R}

\newcommand{\diver}{\operatorname{div}}

\newcommand{\norm}[1]{\left\lVert #1\right\rVert}

\title{Mean field error estimate of the random batch method for vortex blob dynamics for the 2D Navier--Stokes Equation}
\author[1]{Zhenyu Huang\thanks{E-mail: zhenyuhuang@sjtu.edu.cn}}
  
\affil[1]{School of Mathematical Sciences, Institute of Natural Sciences, Shanghai Jiao Tong University, Shanghai, 200240, P.R.China}

\date{\today}

\begin{document}
\maketitle

\begin{abstract}
We propose and analyze the random batch vortex blob method for the 2D Navier--Stokes equation in vorticity form on the whole plane. The vortex blob method is based on an interacting particle system of $N$ particles with computational complexity of $O(N^2)$, which is reduced to $O(N)$ by the random batch method \cite{JinLiLiu2020}. Our main result is a quantitative law level mean field error estimate whose dependence on the blob radius remains algebraic. We treat the two main error mechanisms separately. The random batch error is controlled through a locally coupled auxiliary partition, a symmetric law comparison, and Fisher-information dissipation. The mean field fluctuation is estimated by exploiting the oddness and divergence-free structure of the Biot--Savart kernel. For smooth, strictly positive initial vorticity, we prove on every finite time interval a normalized relative-entropy bound of order
$
O\!\left(\varepsilon^{-4}\tau^2+N^{-1}
\right),
$
with constants independent of \(N\), \(\tau\), and \(\varepsilon\). Here $\tau$ is the batch refreshing interval and $\varepsilon$ is the blob radius. As a consequence, the fixed-particle marginals converge strongly in \(L^1\) to tensor products of the regularized vorticity solution when \(N\to\infty\) and \(\varepsilon^{-2}\tau\to0\).
\end{abstract}

\begingroup
\small

\noindent\textbf{Mathematics Subject Classification.}
65M75, 76M23, 35Q30, 60K35, 65C35.

\medskip

\noindent\textbf{Keywords and phrases.}
Random batch method, vortex blob method;
interacting particle systems, mean-field limit; propagation of chaos;
relative entropy.

\par
\endgroup

\section{Introduction}
We consider the stochastic $N$-particle point vortex system posed on the whole
Euclidean space $\mathbb{R}^2$, whose dynamics are governed by the system of
stochastic differential equations
\begin{equation}\label{eq:intro-pv}
  \mathrm{d}X^i(t)
  =
  \frac{1}{N-1}\sum_{j\neq i}K\bigl(X^i(t)-X^j(t)\bigr)\,\mathrm{d}t
  +
  \sqrt{2\sigma}\,\mathrm{d}B^i(t),
  \qquad 1\leq i\leq N.
\end{equation}
Here $X^i(t)\in\mathbb{R}^2$ denotes the position of the $i$-th vortex particle,
$\{B^i(t)\}_{i=1}^N$ are independent standard two-dimensional Brownian motions,
and $\sigma>0$ is the viscosity parameter, which determines the strength of the
diffusive noise. The interaction kernel $K$ is the two-dimensional Biot--Savart
kernel, given by
\begin{equation}\label{eq:intro-bs}
  K(x)
  =
  \frac{1}{2\pi}\frac{x^\perp}{|x|^2}
  =
  \frac{1}{2\pi}\frac{(-x_2,x_1)}{|x|^2},
  \qquad x=(x_1,x_2)\in\mathbb{R}^2\setminus\{0\}.
\end{equation}
In the present work, we focus on the simplest setting of indistinguishable
particles, in which all point vortices carry the same circulation. The extension
to vortices with general circulations will be addressed in future work.

The normalization factor $1/(N-1)$ in \eqref{eq:intro-pv} corresponds to the
mean field scaling. A central question is to understand the limiting behavior of
the empirical measure
\[
  \mu_t^N=\frac{1}{N}\sum_{i=1}^N\delta_{X^i(t)}
\]
as $N\to\infty$. Formally, under suitable assumptions, the empirical vorticity
is expected to converge to the solution of the two-dimensional incompressible
Navier--Stokes equation in vorticity form,
\begin{equation}\label{eq:intro-ns}
  \partial_t\omega+\operatorname{div}(u\omega)=\sigma\Delta\omega,
  \qquad u=K*\omega.
\end{equation}
The singularity of the Biot--Savart kernel at the origin, however, creates both
analytical and numerical difficulties. A classical way to overcome this issue is
the vortex blob method \cite{Chorin1973}, where the singular kernel is replaced by a mollified
one. More precisely, let $\varphi$ be a smooth mollifier with $\int \varphi =1$ and define
\[
  \varphi_\varepsilon(x)=\varepsilon^{-2}\varphi(x/\varepsilon),
  \qquad
  K_\varepsilon=K*\varphi_\varepsilon .
\]
The corresponding regularized particle system is then
\begin{equation}\label{eq:intro-blob}
  \mathrm{d}X^i_\varepsilon(t)
  =
  \frac{1}{N-1}\sum_{j\neq i}
  K_\varepsilon\bigl(X^i_\varepsilon(t)-X^j_\varepsilon(t)\bigr)\,\mathrm{d}t
  +
  \sqrt{2\sigma}\,\mathrm{d}B^i(t),
  \qquad 1\leq i\leq N.
\end{equation}
The vortex blob approximation has been extensively studied in the numerical
analysis of fluid equations and, more recently, in particle methods for
aggregation and diffusion equations \cite{carrillo2019blob,craig2016blob}. 

Even for the regularized system \eqref{eq:intro-blob}, a direct time
discretization requires the computation of all pairwise interactions at each
time step, leading to an $\mathcal{O}(N^2)$ computational cost per step. To
reduce this cost, one may employ the random batch method proposed by \cite{JinLiLiu2020}. At each time step, the
$N$ particles are randomly divided into $n$ batches of size $p\ll N$, and each
particle interacts only with the particles in the same batch. This reduces the
computational complexity from $\mathcal{O}(N^2)$ to $\mathcal{O}(pN)$ per time
step, while preserving the mean field structure in an unbiased stochastic
sense. The RBM algorithm for \eqref{eq:intro-blob} is given in Algorithm \ref{alg:rbm-vortex-blob}.
\begin{algorithm}[H]
\caption{Random batch vortex blob method in $\mathbb{R}^2$}
\label{alg:rbm-vortex-blob}
\begin{algorithmic}[1]
\State Choose $N$, batch size $p\mid N$, blob radius $\eps$, time step $\tau$.
\For{$k=1,\ldots,\lfloor T/\tau\rfloor-1$}
  \State Divide $\{1,2,\cdots,N\}$ into $N/p$ batches randomly.
    \State Update $\tilde{X}_\eps^{i} (i=1,\cdots,N)$ by solving the following SDE with $t\in[k\tau,(k+1)\tau)$:
    \begin{equation}\label{intro-rbm-blob-particle}
      \mathrm{d} \tilde{X}_\eps^{i}=
      \frac{1}{p-1}
      \sum_{\substack{j\in\xi_k(i),  j\ne i}}
      K_\eps(\tilde{X}_\eps^{i}-\tilde{X}_\eps^{j}) \mathrm{d} t
      +
      \sqrt{2\sigma}\,\mathrm{d} B^i.
  \end{equation}
  \EndFor
\end{algorithmic}
\end{algorithm}
The main purpose of this paper is to prove that whether the empirical
distribution generated by the random batch dynamics still converges to the vorticity solution of the two-dimensional
Navier--Stokes equation \eqref{eq:intro-ns} with regularized Biot-Savart kernel  in the
mean field limit. More precisely, we establish quantitative
propagation-of-chaos type error estimates for the random batch approximation of
the regularized stochastic point vortex system with fixed blob parameter $\varepsilon>0$
\begin{equation}\label{eq:intro-reg-ns}
  \partial_t\omega_\varepsilon
  +
  \operatorname{div}\bigl((K_\varepsilon*\omega_\varepsilon)
  \omega_\varepsilon\bigr)
  =
  \sigma\Delta\omega_\varepsilon,
  \qquad
  K_\varepsilon=K*\varphi_\varepsilon .
\end{equation}
Our analysis is based on the
relative entropy method, which allows us to quantify the combined effects of the
mean field limit, the kernel regularization and the additional stochastic error
introduced by the RBM mechanism. Let $\tilde{F}^N_{\varepsilon}(t)$ denote the joint law of the $N$ particles
evolved under the random batch dynamics \eqref{intro-rbm-blob-particle}
\[
\tilde{F}^N_{\varepsilon}(t) = \text{Law} (\tilde{X}_{\eps}^1(t),\cdots, \tilde{X}_{\eps}^N(t)),
\]
and let
$\omega_\varepsilon^{\otimes N}$ be the tensorized law associated with the
solution of \eqref{eq:intro-reg-ns}. We prove estimates on the normalized
relative entropy
\begin{equation}\label{relative-entropy-rescaled}
  \mathcal H_N\bigl(
    \tilde{F}^N_{\varepsilon}
    \,\big|\,
    \omega_\varepsilon^{\otimes N}
  \bigr)
  :=
  \frac1N
  \int_{\mathbb R^{2N}}
  \tilde{F}^N_{\varepsilon}
  \log
  \frac{
    \tilde{F}^N_{\varepsilon}
  }{
    \omega_\varepsilon^{\otimes N}
  }
  \,\mathrm dX^N ,
\end{equation}
where $X^N=(x_1,\ldots,x_N)$. The resulting bounds quantify the error in terms
of the particle number $N$, the time step of the RBM $\tau$ and the regularization scale $\varepsilon$. 
\subsection{Main results}
We first clarify our settings and notations. Recall the dynamics of the RBM in Algorithm \ref{alg:rbm-vortex-blob}. For each time step $t_k=k\tau$, we denote by $\xi_k$ a random partition of $\{1, \cdots, N\}$ and let $\boldsymbol{\xi}:=\left(\xi_1, 
\xi_2, \cdots\right)$ represent the sequence of batch partitions. We also denote by $\xi_k(\cdot)$ the unique batch in the partition $\xi_k$ such that
\[
\xi_k(i):=\left\{i, i_1, \cdots, i_{p-1}\right\}.
\] It is straightforward to show that
\[
j\in\xi_k(i)
\quad\Longleftrightarrow\quad
i\in\xi_k(j).
\]
We take $\Omega$ as the sample space equipped with the uniform probability measure $\mathbb{P}$, and define the filtration $\left\{\mathcal{F}_k\right\}_{k \geq 0}$, where $\mathcal{F}_k$ is the $\sigma$-algebra generated by $\left\{\xi_j, j \leq k\right\}$ and the Brownian motions before time $t_k$ in the particle system. In what follows, we will use the symbol $\mathbb{E}$ to indicate expectation over this probability space. 

For a fixed batch division sequence $\boldsymbol{\xi}$, we denote the joint law of the system at time $t$
\[
\tilde{F}^{N,\boldsymbol{\xi}}_\varepsilon(t) = \text{Law} (\tilde{X}^{1,\boldsymbol{\xi}}_\eps(t),\cdots, \tilde{X}^{N,\boldsymbol{\xi}}_\eps(t)).
\]
Applying the It\^o formula to the random batch particle system \eqref{intro-rbm-blob-particle} and noting that $\operatorname{div} K_\eps = 0$, one can conclude that the joint law $\tilde{F}^{N,\boldsymbol{\xi}}_\varepsilon$ solves the
following Liouville equation for $t\in[k\tau,(k+1)\tau)$:
\begin{equation}\label{intro-rbm-blob-liouville}
\partial_t \tilde{F}^{N,\boldsymbol{\xi}}_\varepsilon+ \sum_{i=1}^N \left(\frac{1}{p-1} \sum_{\substack{j\in\xi_k(i),  j\ne i}}K_\varepsilon\left(x_i-x_j\right)\right) \cdot \nabla_{x_i} \tilde{F}^{N,\boldsymbol{\xi}}_\varepsilon=\sigma \sum_{i=1}^N \Delta_{x_i} \tilde{F}^{N,\boldsymbol{\xi}}_\varepsilon.
\end{equation}
By averaging over all possible realizations of the random partitions, one can obtain the joint law of the system described by \eqref{intro-rbm-blob-particle}
\[
\tilde{F}^{N}_\varepsilon=\mathbb{E}_{\boldsymbol{\xi}} \tilde{F}^{N,\boldsymbol{\xi}}_\varepsilon.
\]
where the expectation $\mathbb{E}_{\boldsymbol{\xi}}$ is taken over all possible random batch divisions $\boldsymbol{\xi}$.

As for the regularized particle system \eqref{eq:intro-blob}, it is well-known in the mean field theory that the limit self-consistent McKean-Vlasov system writes
\[
\mathrm{d} X_\eps(t)=\left(K_\eps * \omega_\eps\right)\left(X_\eps(t)\right)+\sqrt{2 \sigma} \mathrm{~d} B(t),
\]
where $\omega_\eps$ denotes the law of $X_\eps$. By applying the Itô formula, one also obtains that $\omega_\eps$ solves the vorticity formulation of the regularized 2D Navier–Stokes equation 
\[
  \partial_t\omega_\varepsilon
  +
  \operatorname{div}\bigl((K_\varepsilon*\omega_\varepsilon)
  \omega_\varepsilon\bigr)
  =
  \sigma\Delta\omega_\varepsilon.
\]
After direct and simple computations, one can verify that the $N$-tensorized law of $\omega_\eps$ denoted by $\omega_\varepsilon^{\otimes N}$ can solve the following PDE:
\begin{equation}\label{intro-tensorized-pde}
    \partial_t \omega_\varepsilon^{\otimes N}
    +\sum_{i=1}^N
    \left(K_\varepsilon*\omega_\varepsilon\right)(x_i)
    \cdot\nabla_{x_i}\omega_\varepsilon^{\otimes N}
    =\sigma\sum_{i=1}^N\Delta_{x_i}\omega_\varepsilon^{\otimes N}.
\end{equation}
Our main result is the explicit estimate for the normalized relative entropy between $\tilde{F}^{N}_\varepsilon$ and $\omega_\varepsilon^{\otimes N}$ defined in \eqref{relative-entropy-rescaled}.
\begin{theorem}[Finite-time relative-entropy estimate]
\label{thm:main}
Let $\sigma>0$, $T>0$, $N\geq2$, and let $2\leq p\leq N$ satisfy
$p\mid N$. Let $\varphi\in C_c^\infty(\mathbb R^2)$ be nonnegative and even,
with
$
  \int_{\mathbb R^2}\varphi(x)\,\mathrm dx=1,
$
and define
\[
  \varphi_\varepsilon(x)=\varepsilon^{-2}\varphi(x/\varepsilon),
  \qquad
  K_\varepsilon=K*\varphi_\varepsilon,
  \qquad 0<\varepsilon\leq1.
\]
Assume that $\omega_0$ is a strictly positive probability density and that
there exist constants $a_0,C_0,C_1,C_2>0$ such that
\begin{equation}
\label{eq:main-initial-assumptions}
\begin{aligned}
  \omega_0(x)
  &\leq C_0e^{-a_0|x|^2},
  \\
  |\nabla\log\omega_0(x)|^2
  &\leq C_1(1+|x|^2),
  \\
  |\nabla^2\log\omega_0(x)|
  &\leq C_2(1+|x|^2).
\end{aligned}
\end{equation}
Let $\omega_\varepsilon$ be the solution of
\eqref{eq:intro-reg-ns} with initial data $\omega_0$, and let $\tilde F_\varepsilon^N(t)$ be the joint
law of the random batch dynamics in Algorithm~\ref{alg:rbm-vortex-blob} with symmetric initial density $\tilde F_\varepsilon^N(0)$.
Set
\[
  h_N^0
  :=
  \mathcal H_N\!\left(
    \tilde F_\varepsilon^N(0)
    \,\middle|\,
    \omega_0^{\otimes N}
  \right).
\]
Then, for every $\tau>0$, there exists a constant $C_T>0$, depending only
on $T$, $\sigma$, $p$, $\varphi$, and the constants in
\eqref{eq:main-initial-assumptions}, but independent of
$N$, $\tau$, and $\varepsilon$, such that
\begin{equation}
\label{main-result}
  \sup_{0\leq t\leq T}\mathcal H_N\!\left(
    \tilde F_\varepsilon^N(t)
    \,\middle|\,
    \omega_\varepsilon(t)^{\otimes N}
  \right)
  \leq
  C_T\left[
    h_N^0
    +
    \varepsilon^{-4}\tau^2\left(1+  h_N^0\right)
    +
    \frac1N
  \right].
\end{equation}
In particular, for the product initial law
$\tilde F_\varepsilon^N(0)=\omega_0^{\otimes N}$,
\begin{equation}
\label{main-result-product}
  \sup_{0\leq t\leq T}
  \mathcal H_N\!\left(
    \tilde F_\varepsilon^N(t)
    \,\middle|\,
    \omega_\varepsilon(t)^{\otimes N}
  \right)
  \leq
  C_T\left(
    \varepsilon^{-4}\tau^2+\frac1N
  \right).
\end{equation}
\end{theorem}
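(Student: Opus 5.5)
We plan to prove the estimate by a relative-entropy computation along the random-batch Liouville equation \eqref{intro-rbm-blob-liouville}, averaged over partitions. We split the drift error into two parts: a random-batch error and a mean-field fluctuation error. Write $\bar F:=\omega_\varepsilon^{\otimes N}$ and $\cH_N(t):=\cH_N(\tilde F^N_\varepsilon(t)\,|\,\bar F(t))$.

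\textbf{Step 1: differentiate the entropy.} Using \eqref{intro-rbm-blob-liouville}, \eqref{intro-tensorized-pde} and $\diver K_\varepsilon=0$, we compute on each interval $[t_k,t_{k+1})$:
\[
\frac{d}{dt}\cH_N\le -\frac{\sigma}{2N}\,I(\tilde F|\bar F)
+\frac1N\,\mathbb E\!\int \tilde F^{N,\boldsymbol\xi}_\varepsilon\sum_i\Bigl(\tfrac1{p-1}\textstyle\sum_{j\in\xi_k(i),j\ne i}K_\varepsilon(x_i-x_j)-K_\varepsilon*\omega_\varepsilon(x_i)\Bigr)\cdot\nabla_{x_i}\log\bar F .
\]
Here $I$ denotes the relative Fisher information. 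The cross term splits into two pieces:
\[
\text{(A)}:\ \text{batch drift}-\text{full drift }\tfrac1{N-1}\textstyle\sum_{j\ne i}K_\varepsilon(x_i-x_j),\qquad
\text{(B)}:\ \text{full drift}-K_\varepsilon*\omega_\varepsilon(x_i).
\]

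\textbf{Step 2: regularity of $\omega_\varepsilon$, uniform in $\varepsilon$.} We establish $\varepsilon$-uniform bounds for $\nabla\log\omega_\varepsilon$ and $\nabla^2\log\omega_\varepsilon$ with quadratic growth, together with Gaussian upper and lower bounds on $\omega_\varepsilon$. These follow by propagating \eqref{eq:main-initial-assumptions} with a maximum-principle argument. The key inputs are $\|K_\varepsilon*\omega_\varepsilon\|_{W^{1,\infty}}\le C$ uniformly in $\varepsilon$ (since $\omega_\varepsilon$ is bounded in $L^1\cap L^\infty$) and $\|\nabla K_\varepsilon\|_\infty\lesssim\varepsilon^{-2}$. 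These bounds supply the exponential-moment integrability needed for the Donsker–Varadhan step below.

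\textbf{Step 3: term (B), the mean-field fluctuation.} We follow the Jabin–Wang strategy. The entropy inequality bounds (B) by $\eta^{-1}\cH_N+\eta^{-1}N^{-1}\log\int\bar F\exp(N\eta\,\Phi)$, where
\[
\Phi=\frac1{N(N-1)}\sum_{i\ne j}\bigl(K_\varepsilon(x_i-x_j)-K_\varepsilon*\omega_\varepsilon(x_i)\bigr)\cdot\nabla\log\omega_\varepsilon(x_i).
\]
To make the constants $\varepsilon$-independent, we first rewrite $K_\varepsilon\cdot\nabla\log\omega_\varepsilon$ by integrating against $\omega_\varepsilon$: since $\diver K_\varepsilon=0$, $\omega_\varepsilon\nabla\log\omega_\varepsilon=\nabla\omega_\varepsilon$, which moves the derivative onto smooth functions. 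We then symmetrize using the oddness of $K_\varepsilon$, which leaves a bounded, $\varepsilon$-uniform, mean-zero kernel. The Jabin–Wang exponential large-deviation lemma then gives $O(1/N)$.

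\textbf{Step 4: term (A), the random-batch error; this is the main obstacle.} The batch drift minus the full drift has zero mean over $\xi_k$ conditionally on positions at time $t_k$. The difficulty is that $\tilde F^{N,\boldsymbol\xi}$ depends on $\xi_k$. We therefore introduce an auxiliary coupling: a symmetric law $\hat F$ frozen at $t_k$, or equivalently a locally coupled partition process. On this auxiliary law the expectation of (A) vanishes by symmetry under particle permutations. The remainder is the change of the law over a time $\le\tau$, paired against the drift difference.

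To control the remainder, we move one derivative via integration by parts onto the drift. We then use $|\nabla K_\varepsilon|\lesssim\varepsilon^{-2}$ and Cauchy–Schwarz against the Fisher information:
\[
|\text{(A)}|\le\frac{\sigma}{4N}I+C\varepsilon^{-4}\tau^2\bigl(1+\cH_N(t_k)+\text{moments}\bigr).
\]
Obtaining the factor $\tau^2$ rather than $\tau$ requires the mean-zero cancellation at first order, which is exactly why the symmetric comparison is needed. Finally, Gronwall on $[0,T]$, using the initial relative entropy $h_N^0$, yields \eqref{main-result}. Taking $\tilde F^N_\varepsilon(0)=\omega_0^{\otimes N}$ gives $h_N^0=0$ and hence \eqref{main-result-product}.
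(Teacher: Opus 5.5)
Your Steps 2--3 and the closing Gronwall step essentially follow the paper: uniform Gaussian, score and log-Hessian bounds for $\omega_\varepsilon$, then odd symmetrization into $K_\varepsilon(x-y)\cdot(\nabla\log\omega_\varepsilon(x)-\nabla\log\omega_\varepsilon(y))$, then the Jabin--Wang exponential estimate. There are two inaccuracies there. First, boundedness of $\omega_\varepsilon$ in $L^1\cap L^\infty$ does not give $\nabla(K_\varepsilon*\omega_\varepsilon)\in L^\infty$. The paper uses $\varepsilon$-uniform $W^{2,1}\cap W^{2,\infty}$ bounds, which also control $\nabla^2u_\varepsilon$ for the Hessian estimate, and $\|\nabla K_\varepsilon\|_{L^\infty}\lesssim\varepsilon^{-2}$ can play no role in $\varepsilon$-uniform bounds. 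Second, the symmetrized kernel is not bounded on $\mathbb R^2$; it grows like $1+|x|^2$, which the Jabin--Wang criterion absorbs through Gaussian moments.

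The genuine gap is the random batch term, and it already appears in Step 1. Write $f_t=\tilde F^N_\varepsilon(t)$ and $f^{\xi_k}_t=\tilde F^{N,\xi_k}_\varepsilon(t)$. Since $f^{\xi_k}_t\neq f_t$, the divergence-free structure kills $\int f_t\boldsymbol B^{\xi_k}\cdot\nabla\log f_t$ but not $\mathbb E_{\xi_k}\int f^{\xi_k}_t\boldsymbol B^{\xi_k}\cdot\nabla\log f_t$. For $\sigma=0$ the latter is exactly $\frac{\mathrm d}{\mathrm dt}\int f_t\log f_t$. This is not identically zero: each partition law conserves its entropy, but once they separate, convexity makes the entropy of their average strictly smaller. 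So the batch error must be tested against the relative score $\nabla_{x_i}\log(f_t/\bar F)$, as in the paper's $\mathcal R_{\mathrm{RBM}}$, not against $\nabla\log\bar F$.

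Step 4 then does not close as described. You name a locally coupled partition but do not use its decisive property, and the mechanism you propose fails on three counts.
\begin{enumerate}
\item Comparing $f^{\xi_k}_t$ with a law frozen at $t_k$, or with the average $f_t$, compares densities whose drifts differ in all $N$ coordinates. Their squared difference is of order $NM_\varepsilon^2$, where $M_\varepsilon=\|K_\varepsilon\|_{L^\infty}$, so any $\chi^2$-type bound on the density difference loses powers of $N$. A frozen law also ignores the heat flow, whose increment is not controlled by first-order Fisher information.
\item Integrating $\int_{t_k}^t\partial_sf^{\xi_k}_s\,\mathrm ds$ by parts against $(B^{\xi_k}_i-\overline B_i)\cdot\nabla_{x_i}\log(f_t/\bar F)$ differentiates $\nabla_{x_i}\log f_t$, not only the drift. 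This produces second and third derivatives of $\log f_t$.
\item The Fisher information that appears is the absolute Fisher information of the fixed-partition laws at intermediate times. It is not bounded step by step by $\mathcal H_N(t_k)$ plus moments.
\end{enumerate}

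The paper avoids all three as follows.
\begin{itemize}
\item For each $i$, the one-swap partition $\xi'_{k,i}$ is exchangeable with $\xi_k$, conditionally unbiased for $B_i$, and changes drifts only on $|\Lambda_i|=2p$ labels. This gives $\mathcal R_{\mathrm{RBM}}=\frac1{2N}\sum_i\mathbb E_i\int(f^{\xi_k}_t-f^{\xi'_{k,i}}_t)(B^{\xi_k}_i-B^{\xi'_{k,i}}_i)\cdot\nabla_{x_i}\log(f_t/\bar F)$.
\item Young's inequality is applied before any integration by parts. What remains is the triangular discrimination $\mathscr D_i$ between two Fokker--Planck solutions with identical diffusion and common datum $f_{t_k}$. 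Its evolution has a dissipative term and a source of order $\sqrt p\,M_\varepsilon\mathscr D_i^{1/2}(\mathcal J_{\Lambda_i}(f^{\xi_k}_t)+\mathcal J_{\Lambda_i}(f^{\xi'_{k,i}}_t))^{1/2}$.
\item The counting identity $\sum_i\mathbb E[\sum_{a\in\Lambda_i}A_a\mid\xi_k]\le2p\sum_aA_a$ removes the factor $N$.
\item The identity $\frac{\mathrm d}{\mathrm dt}\mathcal S_N(f^{\xi_k}_t)=-\sigma\mathcal J_N(f^{\xi_k}_t)$, together with convexity of $F\mapsto\int F\log F$, telescopes the Fisher information over all steps. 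The total is then bounded through second moments.
\end{itemize}
Consequently the factor $\varepsilon^{-4}$ is $M_\varepsilon^4$, and $\nabla K_\varepsilon$ never enters.
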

It is well-known that one can bound the $L^1$ norm with square root of the relative entropy by the Csisz{\'a}r
Kullback–Pinsker inequality, see for instance \cite{bolley2005weighted}.

\begin{corollary}[$L^1$-propagation of chaos]
\label{cor:L1-propagation-chaos}
Under the assumptions of Theorem~\ref{thm:main}, let
$\tilde F_{\varepsilon,k}^N(t)$ denote the $k$-particle marginal of
$\tilde F_\varepsilon^N(t)$, where $k\geq1$ is fixed. Then
\begin{align}
\label{eq:L1-marginal-bound}
 &\sup_{0\leq t\leq T}
 \left\|
   \tilde F_{\varepsilon,k}^N(t)
   -
   \omega_\varepsilon(t)^{\otimes k}
 \right\|_{L^1(\mathbb R^{2k})}
 \notag\\
 &\qquad\leq
 C_{T,k}\left[
   (h_N^0)^{1/2}
   +
   \varepsilon^{-2}\tau(1+h_N^0)^{1/2}
   +
   N^{-1/2}
 \right],
\end{align}
where $C_{T,k}$ is independent of $N$, $\tau$, and $\varepsilon$.
Consequently, for fixed $\varepsilon>0$, if
$
  h_N^0\longrightarrow0, \tau\longrightarrow0, N\longrightarrow\infty,
$
then for every fixed $k$,
\[
  \left\|
    \tilde F_{\varepsilon,k}^N
    -
    \omega_\varepsilon^{\otimes k}
  \right\|_{L^\infty(0,T;L^1(\mathbb R^{2k}))}
  \longrightarrow0.
\]
If
$\tilde F_\varepsilon^N(0)=\omega_0^{\otimes N}$, then
\begin{equation}
\label{eq:L1-product-rate}
  \left\|
    \tilde F_{\varepsilon,k}^N
    -
    \omega_\varepsilon^{\otimes k}
  \right\|_{L^\infty(0,T;L^1(\mathbb R^{2k}))}
  \leq
  C_{T,k}\left(
    \varepsilon^{-2}\tau+N^{-1/2}
  \right).
\end{equation}
\end{corollary}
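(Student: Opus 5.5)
The corollary follows from Theorem~\ref{thm:main} by combining subadditivity of relative entropy over marginals with the Csisz\'ar--Kullback--Pinsker inequality. No new dynamical estimate is needed; the plan has three steps.

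\textbf{Step 1: from the full law to the $k$-marginal.} Since $\tilde F_\varepsilon^N(0)$ is symmetric and the random batch dynamics is exchangeable in law (partitions are uniformly random and $K_\varepsilon$ is odd), the averaged law $\tilde F_\varepsilon^N(t)$ is symmetric for all $t$. For symmetric $F^N$ and the product reference $\omega^{\otimes N}$, the standard superadditivity estimate reads
\[
  \frac1k H\bigl(F^N_k\,|\,\omega^{\otimes k}\bigr)
  \leq
  \frac{1}{k\lfloor N/k\rfloor}\, H\bigl(F^N\,|\,\omega^{\otimes N}\bigr).
\]
To prove it, split the $N$ coordinates into $\lfloor N/k\rfloor$ disjoint blocks of size $k$ plus a remainder. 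The relative entropy with respect to a product measure is at least the sum of the relative entropies of the block marginals, because the chain rule gives nonnegative conditional terms. By symmetry, every block marginal equals $F^N_k$. Since $k\lfloor N/k\rfloor\geq N/2$ for $N\geq k$, this yields
\[
  H\bigl(\tilde F^N_{\varepsilon,k}(t)\,|\,\omega_\varepsilon(t)^{\otimes k}\bigr)\leq 2k\,\mathcal H_N\bigl(\tilde F_\varepsilon^N(t)\,|\,\omega_\varepsilon(t)^{\otimes N}\bigr).
\]
For $N<k$ the claim is trivial: the $L^1$ distance is at most $2$, and we can enlarge $C_{T,k}$.

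\textbf{Step 2: Pinsker.} Apply $\|f-g\|_{L^1}\leq\sqrt{2H(f|g)}$ together with \eqref{main-result}. Then use $\sqrt{a+b+c}\leq\sqrt a+\sqrt b+\sqrt c$ to get
\[
  \|\tilde F^N_{\varepsilon,k}-\omega_\varepsilon^{\otimes k}\|_{L^1}\leq \sqrt{4kC_T}\,\bigl[(h_N^0)^{1/2}+\varepsilon^{-2}\tau(1+h_N^0)^{1/2}+N^{-1/2}\bigr].
\]
The right-hand side is uniform in $t\in[0,T]$, so taking the supremum gives \eqref{eq:L1-marginal-bound} with $C_{T,k}=2\sqrt{kC_T}$. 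This constant is independent of $N,\tau,\varepsilon$ because $C_T$ is.

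\textbf{Step 3: consequences.} For fixed $\varepsilon$, each term on the right tends to $0$ under the stated limits, since $h_N^0\to0$ keeps $(1+h_N^0)^{1/2}$ bounded. For the product initial law, $h_N^0=0$, which gives \eqref{eq:L1-product-rate}.

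The only point requiring care is Step 1. We must verify that the symmetry of $\tilde F^N_\varepsilon(t)$ is preserved by the averaged batch dynamics: for each fixed $\boldsymbol\xi$ the law is not symmetric, but averaging over uniformly random partitions restores permutation invariance. We also need the block subadditivity inequality, which is standard.
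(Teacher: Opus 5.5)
Your proposal is correct and follows the paper's route. The paper only invokes Theorem~\ref{thm:main} together with the Csisz\'ar--Kullback--Pinsker inequality, and your Step~1 (exchangeability of the averaged law plus block superadditivity, giving $H(\tilde F^N_{\varepsilon,k}\,|\,\omega_\varepsilon^{\otimes k})\le 2k\,\mathcal H_N$) makes explicit the marginal reduction that the paper leaves implicit. One minor remark: the permutation invariance of $\tilde F^N_\varepsilon(t)$ comes only from the uniform, independent partitions and the i.i.d.\ Brownian motions, so the oddness of $K_\varepsilon$ is not needed there.
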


The significance of Theorem \ref{thm:main} lies in the fact that its dependence on the blob radius is only {\it algebraic}. Indeed, if one treats $K_\varepsilon$ merely as a generic smooth Lipschitz kernel and follows the standard argument in \cite{HuangJinLi2025}, the natural stability coefficient would involve
\[
\|\nabla K_\varepsilon\|_{L^\infty}
\lesssim \varepsilon^{-2},
\]
which would lead to an error bound of the form $\exp(C_T\varepsilon^{-2})$. Such an exponential dependence on the regularization scale is highly unfavorable, both analytically and computationally. In particular, in order to reach a prescribed accuracy, one would have to take the time step exponentially small in $\varepsilon^{-1}$, which is impractical in numerical applications. A key point of the present work is therefore to exploit the special structure of the mollified Biot–Savart kernel and to refine the stability analysis so that all constants depend on $\varepsilon$ only in an algebraic way. This is one of the main distinctions between the present paper and the general Lipschitz-kernel framework developed in \cite{HuangJinLi2025}.

The two-dimensional Biot--Savart kernel with even mollifier has more structure than a generic Lipschitz force. On $\mathbb{R}^2$, one has
\begin{equation}\label{eq:intro-structure}
  \diver K_\varepsilon=0,
  \qquad K_\varepsilon(x)=-K_\varepsilon(-x),
  \qquad |z||K_\varepsilon(z)|\lesssim1.
\end{equation}
These identities are responsible for the cancellation in the relative entropy method for singular vortex models.  In the present work they allow us to estimate the full empirical Biot--Savart fluctuation without putting $\norm{\nabla K_\eps}_{L^\infty}$ into the Gronwall exponent.

The latter is recovered by taking the subsequent vanishing-blob limit
$\varepsilon\to0$. This limit is a standard consequence of the compactness
argument underlying Leray's mollified approximation of the Navier--Stokes
equations. Indeed, since
\[
  K_\varepsilon*\omega_\varepsilon
  =
  (K*\varphi_\varepsilon)*\omega_\varepsilon
  =
  \varphi_\varepsilon*(K*\omega_\varepsilon),
\]
the regularized vorticity equation \eqref{eq:intro-reg-ns} can be viewed as the
curl formulation of the Leray-regularized Navier--Stokes equations, in which the
transport velocity is mollified. Therefore, under standard assumptions on the
initial vorticity, every limit point of $\omega_\varepsilon$ as
$\varepsilon\to0$ is a weak solution of the two-dimensional Navier--Stokes
equations in vorticity form; by uniqueness in the corresponding two-dimensional
vorticity class, the whole family converges to the Navier--Stokes solution. We
refer to the classical work of Leray \cite{Leray1934} and to the modern
presentations in \cite{GuermondOdenPrudhomme2004,BerselliSpirito2021} for this
mollified approximation and compactness argument. Combining the subsequent vanishing-blob limit with the \(L^1\)-propagation of chaos \eqref{eq:L1-product-rate}
established above gives the complete convergence of the random batch vortex
blob approximation.
\begin{corollary}[The RBM particle approximation of 2D Navier--Stokes equation]
\label{cor:sequential-particle-batch-blob-limit}
Let \(\omega\) be the solution of the two-dimensional Navier--Stokes
vorticity equation \eqref{eq:intro-ns} with initial data \(\omega_0\). 
Let \(\tilde F_{\varepsilon,k}^{N,\tau}\) denote the explicit
dependence of the \(k\)-particle marginal on the batch time step
\(\tau\). Assume that
$
  h_N^0
  \longrightarrow0.
$
Then, for every fixed \(k\geq1\) and \(T>0\),
\begin{equation}
\label{eq:sequential-particle-batch-blob-limit}
  \lim_{\varepsilon\downarrow0}
  \lim_{\tau\downarrow0}
  \limsup_{N\to\infty}
  \sup_{0\leq t\leq T}
  \left\|
    \tilde F_{\varepsilon,k}^{N,\tau}(t)
    -
    \omega(t)^{\otimes k}
  \right\|_{L^1(\mathbb R^{2k})}
  =
  0.
\end{equation}
\end{corollary}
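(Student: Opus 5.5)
The proof splits the limit into three layers and uses the triangle inequality
\[
\|\tilde F_{\varepsilon,k}^{N,\tau}(t)-\omega(t)^{\otimes k}\|_{L^1}
\leq
\|\tilde F_{\varepsilon,k}^{N,\tau}(t)-\omega_\varepsilon(t)^{\otimes k}\|_{L^1}
+
\|\omega_\varepsilon(t)^{\otimes k}-\omega(t)^{\otimes k}\|_{L^1}.
\]
The first term is handled by Corollary~\ref{cor:L1-propagation-chaos}. By \eqref{eq:L1-marginal-bound}, for fixed $\varepsilon$ and $\tau$,
\[
\limsup_{N\to\infty}\sup_{t\le T}\|\tilde F_{\varepsilon,k}^{N,\tau}(t)-\omega_\varepsilon(t)^{\otimes k}\|_{L^1}\le C_{T,k}\,\varepsilon^{-2}\tau.
\]
Here we use $h_N^0\to0$, so that $(1+h_N^0)^{1/2}$ stays bounded. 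The constant $C_{T,k}$ does not depend on $N$, $\tau$ or $\varepsilon$. Hence the $\tau\downarrow0$ limit of this term vanishes for each fixed $\varepsilon$. The second term does not depend on $N$ or $\tau$, so after the two inner limits we are left with
\[
\lim_{\varepsilon\downarrow0}\sup_{t\le T}\|\omega_\varepsilon(t)^{\otimes k}-\omega(t)^{\otimes k}\|_{L^1}.
\]
The telescoping bound for products of probability densities gives
\[
\|\omega_\varepsilon^{\otimes k}-\omega^{\otimes k}\|_{L^1}\le k\|\omega_\varepsilon-\omega\|_{L^1}.
\]
So it remains to prove $\sup_{t\le T}\|\omega_\varepsilon(t)-\omega(t)\|_{L^1(\mathbb R^2)}\to0$.

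This vanishing-blob convergence is the main obstacle. It must hold in $L^\infty_tL^1_x$, which is stronger than the weak compactness statement cited from Leray. I would prove it directly by a stability estimate, using the smooth, Gaussian-decaying data from \eqref{eq:main-initial-assumptions}.

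First, I would establish $\varepsilon$-uniform bounds on $[0,T]$:
\begin{enumerate}
\item mass one and positivity;
\item uniform $L^1\cap L^\infty$ bounds on $\omega_\varepsilon$, since the drift is divergence-free and the maximum principle applies;
\item uniform second moments;
\item uniform $\|\nabla\omega_\varepsilon\|_{L^1\cap L^2}$ via parabolic smoothing.
\end{enumerate}
The same bounds hold for $\omega$.

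Next, I set $e=\omega_\varepsilon-\omega$, which satisfies
\[
\partial_t e+\diver\bigl((K_\varepsilon*\omega_\varepsilon)e\bigr)+\diver\bigl((K_\varepsilon*e)\omega\bigr)+\diver\bigl(((K_\varepsilon-K)*\omega)\omega\bigr)=\sigma\Delta e.
\]
I would do an $L^2$ (or weighted $L^2$) energy estimate. The first transport term vanishes because $\diver K_\varepsilon=0$. The second is bounded by $\|K_\varepsilon*e\|_{L^4}\|\nabla\omega\|_{L^4}\|e\|_{L^2}$. Hardy--Littlewood--Sobolev gives $\|K*e\|_{L^4}\lesssim\|e\|_{L^{4/3}}$, and convolution with $\varphi_\varepsilon$ is a contraction, so this is $\lesssim(\|e\|_{L^1}+\|e\|_{L^2})\|e\|_{L^2}$. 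The third term is a source with
\[
\|(K_\varepsilon-K)*\omega\|_{L^\infty}=\|\varphi_\varepsilon*u-u\|_{L^\infty}\lesssim\varepsilon\|\nabla u\|_{L^\infty},
\]
where $u=K*\omega$, and $\nabla u$ is bounded for smooth $\omega$. Gronwall then gives $\|e\|_{L^2}\lesssim\varepsilon$.

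To pass from $L^2$ to $L^1$, I would use uniform second moments together with Cauchy--Schwarz on a large ball:
\[
\|e\|_{L^1}\le\|e\|_{L^1(B_R)}+\|e\|_{L^1(B_R^c)}\lesssim R\|e\|_{L^2}+R^{-2}.
\]
Optimizing in $R$ gives an $L^1$ rate of order $\varepsilon^{2/3}$.

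A cruder alternative, if the energy estimate proves awkward, is to use the uniform bounds for compactness of $\omega_\varepsilon$ in $C([0,T];L^1)$ via Aubin--Lions plus tightness. Identifying the limit as the unique solution of \eqref{eq:intro-ns} then gives convergence of the whole family. Either route makes the outer limit zero, which completes the proof.
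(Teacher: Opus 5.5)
Your reduction is the same as the paper's. You use the triangle inequality through $\omega_\varepsilon(t)^{\otimes k}$, then Corollary~\ref{cor:L1-propagation-chaos} with $h_N^0\to0$ to make the first term vanish in the two inner limits. The second term then reduces to proving $\sup_{t\le T}\|\omega_\varepsilon(t)-\omega(t)\|_{L^1}\to0$. The paper settles this last point qualitatively, by Leray's compactness argument plus uniqueness for the 2D vorticity equation. Your ``cruder alternative'' is exactly that route. Your worry about weak versus strong convergence is answered by Proposition~\ref{prop:uniform-reference-estimates}. Its $\varepsilon$-uniform $W^{2,1}\cap W^{2,\infty}$ bounds give a uniform bound on $\partial_t\omega_\varepsilon$ in $L^1$, and together with the uniform Gaussian tails this yields compactness in $C([0,T];L^1(\mathbb R^2))$.

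Your primary, quantitative route has a genuine gap: the unweighted $L^2$ energy estimate does not close.
\begin{itemize}
\item The nonlinear term is bounded by $C\|\nabla\omega\|_{L^4}\|e\|_{L^{4/3}}\|e\|_{L^2}\le C\|e\|_{L^1}^{1/2}\|e\|_{L^2}^{3/2}$. On $\mathbb R^2$ nothing controls $\|e\|_{L^1}$, or $\|e\|_{L^{4/3}}$, by $\|e\|_{L^2}$. The dissipation $\sigma\|\nabla e\|_{L^2}^2$ only helps for exponents above $2$.
\item The only available bound is $\|e\|_{L^1}\le2$. It gives $\frac{\mathrm d}{\mathrm dt}\|e\|_{L^2}\le C\|e\|_{L^2}^{1/2}+C\varepsilon$, or $\le C(1+\|e\|_{L^2})+C\varepsilon$ with your form of the bound. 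Starting from $e(0)=0$, this yields no smallness in $\varepsilon$.
\item So $\|e\|_{L^2}\lesssim\varepsilon$ is not justified, and the later optimization in $R$ has nothing to work with.
\end{itemize}
The weight you mention in parentheses is therefore not optional: you need a norm that dominates $L^1$. With $w=(1+|x|^2)^2$ one has $\|e\|_{L^1}\le\sqrt\pi\,\|e\|_{L^2(w)}$. The extra terms produced by the weight are harmless, since $|\nabla w|\le2w$ and $u_\varepsilon$ is uniformly bounded.

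A simpler fix is to work with $Y(t)=\|e(t)\|_{L^1}+\|e(t)\|_{L^\infty}$. The linear evolution $\partial_t+u_\varepsilon\cdot\nabla-\sigma\Delta$ with divergence-free $u_\varepsilon$ generates contractions on $L^1$ and $L^\infty$. Also $\|K*(\varphi_\varepsilon*e)\|_{L^\infty}\le C(\|e\|_{L^1}+\|e\|_{L^\infty})$. Using $\nabla\omega\in L^\infty_t(L^1\cap L^\infty)$ and $\nabla u\in L^\infty_{t,x}$ for the smooth limit solution, Duhamel gives $Y(t)\le C_T\int_0^t\bigl(Y(s)+\varepsilon\bigr)\,\mathrm ds$. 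Hence $\sup_{t\le T}\|\omega_\varepsilon(t)-\omega(t)\|_{L^1}\le C_T\varepsilon$, with no moment interpolation needed.

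Once repaired, your route gives something the paper's does not: a rate. For product initial data it yields
\[
\sup_{t\le T}\|\tilde F^{N,\tau}_{\varepsilon,k}(t)-\omega(t)^{\otimes k}\|_{L^1}\le C_{T,k}(\varepsilon^{-2}\tau+N^{-1/2}+\varepsilon).
\]
This turns the iterated limit into a joint one along any regime with $\varepsilon^{-2}\tau\to0$. The compactness route is softer and needs only bounds the paper has already proved, but it gives no quantitative control of the blob error.
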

\subsection{Related work}
Point vortices enter the analysis of two-dimensional incompressible flow in two ways which are closely related but conceptually different. In the numerical-analysis
literature, a vortex method is a Lagrangian discretization of the vorticity
formulation of the Euler or Navier--Stokes equations, and the principal issue is
to control the combined effects of spatial sampling, core regularization, and
time discretization.  In the probabilistic mean field literature, by contrast,
the stochastic point-vortex system is itself an exchangeable interacting
particle system, and the main question is propagation of chaos as the number of
vortices tends to infinity. 

The random vortex method goes back at least to Chorin's construction for
slightly viscous flow~\cite{Chorin1973}.  For smooth Euler solutions, Hald
established an early convergence theory for vortex methods~\cite{Hald1979}, and
Beale-Majda developed a systematic analysis of high-order accurate vortex
schemes in two and three dimensions~\cite{BealeMajda1982I,BealeMajda1982II}.
Goodman-Hou-Lowengrub proved convergence of the point-vortex method for
the two-dimensional Euler equations~\cite{GoodmanHouLowengrub1990}, while the
corresponding three-dimensional analysis was carried out by Hou-Lowengrub~\cite{HouLowengrub1990}.  For viscous flow, Goodman proved convergence
of the random-vortex method~\cite{Goodman1987}, and Long subsequently obtained
an almost optimal probabilistic convergence estimate in two
dimensions~\cite{Long1988}.  These smooth-solution theories were later extended
to substantially rougher data. Liu-Xin proved convergence of vortex methods
to weak two-dimensional Euler solutions whose initial vorticity is a
one-signed finite Radon measure with locally finite kinetic energy, a class that
includes vortex-sheet data, under an appropriate relation between the particle
spacing and the blob scale~\cite{LiuXin1995}.  

A parallel probabilistic line of work studies stochastic point vortices as a
microscopic approximation of the viscous vorticity equation.  Early qualitative
mean field results were obtained by Marchioro-Pulvirenti~\cite{MarchioroPulvirenti1982}.  Osada proved
propagation of chaos for bounded initial densities when the viscosity is
sufficiently large~\cite{Osada1986,Osada1987}. A major advance was made by Fournier-Hauray-Mischler~\cite{FournierHaurayMischler2014}. By using entropy and Fisher-information
compactness, they treated the full plane $\mathbb R^2$, arbitrary positive
viscosity, and general circulations, for initial laws with finite entropy and a
positive moment. Another breakthrough was the quantitative propagation of chaos through the relative-entropy method of
Jabin-Wang~\cite{JabinWang2018}.  Their argument combines an entropy
evolution identity with a sharp exponential law of large numbers and applies
to a broad class of rough interactions, including kernels in
$\dot W^{-1,\infty}$.  Guillin-Le Bris-Monmarch\'e subsequently obtained uniform-in-time propagation of chaos for the
two-dimensional vortex model on the torus and for related singular stochastic
systems~\cite{GuillinLeBrisMonmarche2025}.  For the whole-space, Feng-Wang proved the first quantitative relative-entropy estimate for the viscous vortex
model by using Li--Yau and Hamilton type estimates for the
mean field vorticity equation \cite{FengWang2026}.  They also extended the
analysis to general circulations in a high viscosity regime \cite{FengWang2026Nonlinearity}. More general accounts of mean field limits and their
probabilistic and kinetic formulations can be found in the surveys \cite{Golse2016, Jabin2014, JabinWang2017}.

The Random Batch Method (RBM) was introduced by Jin-Li-Liu as a generic
way to accelerate large interacting particle systems~\cite{JinLiLiu2020}. Their original analysis emphasized that the time step and the error bound can be made independent of the total particle number under suitable assumptions. The
subsequent numerical analysis established strong and weak convergence estimates
for systems with disparate species and particle weights~\cite{JinLiLiu2021},
and treated second-order interacting systems~\cite{JinLiSun2022}.


The RBM has already been used in a
wide range of problems.  In electrostatic and molecular simulations, examples
include particle methods for the Poisson--Nernst--Planck and
Poisson--Boltzmann equations~\cite{LiLiuTang2022}, Monte Carlo treatment of
many-body systems with singular kernels~\cite{LiXuZhao2020}, the random batch
Ewald method for Coulomb interactions~\cite{JinLiXuZhao2021}, and its
superscalable implementation~\cite{LiangEtAl2022}.  In optimization and
sampling, the RBM has been incorporated into consensus-based global
optimization~\cite{CarrilloJinLiZhu2021} and into a stochastic version of Stein
variational gradient descent~\cite{LiLiLiuLiuLu2020}.  Quantum applications
include random batch quantum Monte Carlo~\cite{JinLiQuantum2020} and the RBM for
$N$-body quantum dynamics~\cite{GolseJinPaul2021}.  For kinetic equations,
Carrillo-Jin-Tang developed the RBM particle methods for the
homogeneous Landau equation~\cite{CarrilloJinTang2022}.  A particularly
interesting recent development uses batches of size two as genuine collision
pairs by Du-Li \cite{DuLi2024}. They introduced a collision-oriented interacting particle system
and analyzed molecular chaos for regularized Landau-type equations, while \cite{DuLiXieYu2025} designed a structure-preserving
collisional particle method for the Landau equation.
These examples show that the RBM can also encode a
physically meaningful sparse interaction mechanism in suitable kinetic settings.

The mean field analysis of the RBM has two distinct formulations.  Jin-Li first
studied an iterated limit in which $N\to\infty$ at a fixed batch time step
$\tau$, obtaining a discrete-time nonlinear mean field dynamics, and then
proved that this dynamics converges in Wasserstein distance to the usual
nonlinear Fokker--Planck equation as $\tau\to0$~\cite{JinLi2022}.  This limit is
structurally different from classical propagation of chaos because the random
repartitioning imposes a fresh product structure at the switching times.  A
direct comparison between the finite-$N$ RBM law and the tensorized continuous
mean field law was later obtained by Huang-Jin-Li~\cite{HuangJinLi2025}.
Under regularity and dissipativity assumptions, they proved the uniform-in-time
scaled relative-entropy estimate
\[
  \mathcal H_N\!\left(F^{N,\mathrm{RBM}}_t\mid \rho_t^{\otimes N}\right)
  = \mathcal O\!\left(\tau^2+\frac1N\right),
\]
which in turn yields a Wasserstein error of order
$\mathcal O(\tau+N^{-1/2})$.  More recently, Jin-Li-Wang used the BBGKY
hierarchy to obtain the sharper
$k$-marginal entropy estimate
$\mathcal O(k^2/N^2+k\tau^2)$~\cite{LiWangJin2025}.

Long-time behavior and time discretization of the RBM have also been studied.  Jin-Li-Ye-Zhou proved geometric ergodicity for the RBM under contractive conditions~\cite{JinLiYeZhou2023}.
Ye-Zhou also analyzed the fully time-discrete method together with its
mean field limit and obtained long-time error bounds for both particle and
McKean--Vlasov dynamics~\cite{YeZhou2024}.  For the Cucker--Smale model, Ha-Jin-Kim-Ko established a uniform-in-time error estimate based on the
flocking structure~\cite{HaJinKimKo2021}; Wang-Lin then justified the RBM mean field limit and its small-time-step limit in Wasserstein distance 
\cite{WangLin2025}.  Jin-Wang-Wang further obtained asymptotically
uniform estimates for replacement-type RBM in the Cucker--Smale
setting~\cite{JinWangWang2026}.  Other variants include the RBM for systems driven
by L\'evy processes~\cite{LiuWang2025Levy}, and the RBM with replacement~\cite{CaiLiuWang2026}.

\subsection{Proof strategy}
Our proof separates the random batch error from the mean field fluctuation. This separation is essential because the two terms use different structures of
the vortex dynamics. 

First, Proposition~\ref{prop:entropy-evolution-rbm-vortex} gives the exact
relative-entropy identity
\[
  \frac{\mathrm d}{\mathrm dt}\mathcal{H}_N(t)
  =
  -\sigma\mathcal I_N(t)
  +\mathcal R_{\mathrm{RBM}}(t)
  +\mathcal R_{\mathrm{mf}}(t).
\]
The proof therefore reduces to estimating the two remainder terms without
placing $\|\nabla K_\varepsilon\|_{L^\infty}$ in the Gronwall coefficient.

For the random batch contribution, we construct a
locally coupled auxiliary partition $\xi_{k,i}'$ for each label $i$ following an approach similar to that in \cite{LiWangJin2025}. Its conditional
unbiasedness converts the centered batch force into a difference of two
fixed-partition forces, while exchangeability gives a symmetric product of the
corresponding force and density differences. The two partitions are either
identical or differ in only two batches. Consequently, their full drift
difference is supported on at most $2p$ particle coordinates. Then we can obtain a
triangular-discrimination estimate which avoids any loss in $N$ by following a combinatorial average of the
local absolute Fisher information. Boltzmann entropy
dissipation then telescopes over the time steps and yields
\[
  \int_0^T|\mathcal R_{\mathrm{RBM}}(t)|\,\mathrm dt
  \leq
  \frac{\sigma}{8}\int_0^T\mathcal I_N(t)\,\mathrm dt
  +C_T\varepsilon^{-4}\tau^2(1+h_N^0).
\]
The factor $\varepsilon^{-4}$ comes only from
$\|K_\varepsilon\|_{L^\infty}^4\simeq\varepsilon^{-4}$.

The mean field contribution is controlled by the Biot--Savart structure.
Proposition~\ref{prop:uniform-reference-estimates} first provides Gaussian,
score, and logarithmic-Hessian estimates for $\omega_\varepsilon$ that are uniform
in the blob radius. The oddness of $K_\varepsilon$ then pairs the kernel with
the score difference
\[
  \nabla\log\omega_\varepsilon(x)
  -
  \nabla\log\omega_\varepsilon(y).
\]
The logarithmic-Hessian bound cancels the near-diagonal singularity. The
canonical exponential estimate of \cite{JabinWang2018} subsequently gives
\[
  |\mathcal R_{\mathrm{mf}}(t)|
  \leq
  C_T\left(\mathcal{H}_N(t)+\frac1N\right),
\]
with a constant independent of $\varepsilon$.

Combining the two estimates with the entropy identity and applying Gronwall's
inequality proves Theorem~\ref{thm:main}. In particular, the blob radius does
not enter the Gronwall exponent; it appears only through the algebraic
consistency factor $\varepsilon^{-4}\tau^2$.

\subsection{Organization of the paper}

Section~\ref{sec:entropy-evolution} derives the exact evolution identity for
the normalized relative entropy. Section~\ref{sec:random-batch-contribution}
constructs the locally coupled auxiliary partition and proves the integrated
random batch estimate. Section~\ref{sec:mean-field-contribution} establishes
the uniform Gaussian and logarithmic estimates for the regularized vorticity
and then controls the mean field Biot--Savart fluctuation. Finally,
Section~\ref{sec:proof-main} combines these estimates to prove
Theorem~\ref{thm:main}.

\section{Time evolution of the relative entropy}
\label{sec:entropy-evolution}
We present in this subsection the time evolution of the normalized relative entropy, which is computed
directly using the evolution PDE of the two systems. First, to deal with the time-discretized random structure of \eqref{intro-rbm-blob-liouville}, we introduce an auxiliary distribution $
    \tilde{F}^{N,\xi_k}_{\varepsilon}$ defined on time interval $\left[k\tau, (k+1)\tau\right)$ given by
\[
\tilde{F}^{N,\xi_k}_{\varepsilon}=\mathbb{E}\left[\tilde{F}^{N,\boldsymbol{\xi}}_{\varepsilon} \mid \xi_k\right] .
\]
This means that the batch at $t_k=k\tau$ is fixed to be $\xi_k$ while the randomness brought by batches before $t_k$ is averaged out. The partition $\xi_k$ is sampled independently of the particle system up to
time $t_k$. Hence
\[
  \operatorname{Law}
  \bigl(
  \tilde X_\varepsilon(t_k)
  \mid
  \xi_k
  \bigr)
  =
  \operatorname{Law}
  \bigl(
  \tilde X_\varepsilon(t_k)
  \bigr),
\]
which gives
\[
  \tilde F^{N,\xi_k}_\varepsilon(t_k)
  =
  \tilde F^N_\varepsilon(t_k).
\]
For a fixed current partition $\xi_k$, the conditional density satisfies
\begin{equation}
\label{eq:fixed-batch-fp-proof}
  \partial_t
  \tilde F^{N,\xi_k}_\varepsilon+
  \sum_{i=1}^N \operatorname{div}_{x_i}
  \left[
  \left(
    \frac1{p-1}
    \sum_{\substack{j\in\xi_k(i), j\ne i}}
    K_\varepsilon(x_i-x_j)
  \right) 
  \tilde F^{N,\xi_k}_\varepsilon \right]
 =
  \sigma
  \sum_{i=1}^N
  \Delta_{x_i}
  \tilde F^{N,\xi_k}_\varepsilon.
\end{equation}
Note that if we observe the random batch particles system \eqref{intro-rbm-blob-particle} at the discrete time $t_k$, it then forms a time-homogeneous Markov chain. It is clear that the law of $\tilde{X}_\eps^{\boldsymbol{\xi}}$ at $t_k$ is given by
\[
\tilde{F}^{N}_\varepsilon(t_k)=\mathbb{E}_{\boldsymbol{\xi}} \tilde{F}^{N,\boldsymbol{\xi}}_\varepsilon(t_k) .
\]
Hence, $\tilde{F}^{N,\xi_k}_\varepsilon$ is just the law of the random batch particles with the batch at $t_k$ prescribed. By the Markov property, we can understand that the positions at $t_k$ are drawn from $\tilde{F}^{N}_\varepsilon(t_k)$, and they evolve according to the batch $\xi_k$ during $\left[t_k, t_{k+1}\right]$. With the above understanding, it is clear that
\[
\tilde{F}^{N}_\varepsilon=\mathbb{E}_{\xi_k} \tilde{F}^{N,\xi_k}_\varepsilon=\mathbb{E}_{\boldsymbol{\xi}} \tilde{F}^{N,\boldsymbol{\xi}}_\varepsilon,
\]
for $t \in\left[t_k, t_{k+1}\right)$. Then one has
\begin{equation}\label{rbm-blob-liouville-pde-average}
    \partial_t \tilde{F}^{N}_\varepsilon+ \sum_{i=1}^N \mathbb{E}_{\xi_k} \left[\operatorname{div}_{x_i}
    \left(\left(\frac{1}{p-1} \sum_{\substack{j\in\xi_k(i),  j\ne i}}K_\eps\left(x_i-x_j\right)\right) \cdot \nabla_{x_i} \tilde{F}^{N,\xi_k}_\varepsilon\right)\right]=\sigma \sum_{i=1}^N \Delta_{x_i} \tilde{F}^{N}_\varepsilon.
\end{equation}

\begin{proposition}[Time evolution of the normalized relative entropy]
\label{prop:entropy-evolution-rbm-vortex}
Let $N\geq2$, let $2\leq p\leq N$ with $p\mid N$, and fix
$k\geq0$ and $t\in(t_k,t_{k+1})$, where $t_k=k\tau$.
For a prescribed current random batch partition $\xi_k$, let
$\tilde F^{N,\xi_k}_\varepsilon(t,X^N)$ be the conditional law of the
random batch system on $[t_k,t_{k+1})$, with
\[
  \tilde F^{N,\xi_k}_\varepsilon(t_k)
  =
  \tilde F^N_\varepsilon(t_k),
  \qquad
  \tilde F^N_\varepsilon(t)
  =
  \mathbb E_{\xi_k}
  \tilde F^{N,\xi_k}_\varepsilon(t).
\]
Recall that
\[
  \mathcal H_N
  \bigl(
  \tilde F^N_\varepsilon
  \mid
  \omega_\varepsilon^{\otimes N}
  \bigr)
  :=
  \frac1N
  \int_{\mathbb R^{2N}}
  \tilde F^N_\varepsilon
  \log
  \frac{
    \tilde F^N_\varepsilon
  }{
    \omega_\varepsilon^{\otimes N}
  }
  \,\mathrm dX^N.
\]
Define the normalized relative Fisher information by
\begin{equation}
\label{eq:relative-fisher-rbm-vortex}
  \mathcal I_N(t)
  :=
  \frac1N
  \sum_{i=1}^N
  \int_{\mathbb R^{2N}}
  \tilde F^N_\varepsilon
  \left|
  \nabla_{x_i}
  \log
  \frac{
    \tilde F^N_\varepsilon
  }{
    \omega_\varepsilon^{\otimes N}
  }
  \right|^2
  \,\mathrm dX^N.
\end{equation}
Then the time evolution of the normalized relative entropy can be written as
\begin{equation}\label{eq:entropy-evolution-rbm-vortex}
   \frac{\mathrm{d}}{\mathrm{~d} t} \mathcal{H}_N\left(\tilde{F}_{\varepsilon}^N \mid \omega_{\varepsilon}^{\otimes N}\right)=-\sigma \mathcal{I}_N(t)+\mathcal{R}_{\mathrm{RBM}}(t)+\mathcal{R}_{\mathrm{mf}}(t), 
\end{equation}
where
\begin{align}
\label{eq:R-rbm-linear}
\mathcal R_{\mathrm{RBM}}(t)
={}&
\frac1N
\sum_{i=1}^N
\mathbb E_{\xi_k}
\int_{\mathbb R^{2N}}
\tilde F^{N,\xi_k}_\varepsilon
\Bigg[
  \frac1{p-1}
  \sum_{\substack{j\in\xi_k(i), j\ne i}}
  K_\varepsilon(x_i-x_j)
  -
  \frac1{N-1}
  \sum_{j\ne i}
  K_\varepsilon(x_i-x_j)
\Bigg]
\notag\\
&\hspace{3.2cm}
\cdot
\nabla_{x_i}
\log
\frac{
  \tilde F^N_\varepsilon
}{
  \omega_\varepsilon^{\otimes N}
}
\,\mathrm dX^N,
\end{align}
and
\begin{align}
\label{eq:R-mf-score}
\mathcal R_{\mathrm{mf}}(t)
=-{}&
\frac1N
\sum_{i=1}^N
\int_{\mathbb R^{2N}}
\tilde F^N_\varepsilon
\Bigg[
  \frac1{N-1}
  \sum_{j\ne i}
  K_\varepsilon(x_i-x_j)
  -
  (K_\varepsilon*\omega_\varepsilon)(x_i)
\Bigg]
\cdot
\nabla\log\omega_\varepsilon(x_i)
\,\mathrm dX^N.
\end{align}

\end{proposition}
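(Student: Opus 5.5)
We differentiate $\mathcal H_N$ directly, using the averaged Liouville equation \eqref{rbm-blob-liouville-pde-average} for $\tilde F^N_\varepsilon$ and the tensorized equation \eqref{intro-tensorized-pde} for $\omega_\varepsilon^{\otimes N}$. Throughout we write $G:=\omega_\varepsilon^{\otimes N}$ and $h:=\log(\tilde F^N_\varepsilon/G)$, and we abbreviate the drifts as
\[
b_i^{\xi_k}:=\tfrac1{p-1}\sum_{j\in\xi_k(i),\,j\ne i}K_\varepsilon(x_i-x_j),\qquad
b_i^{N}:=\tfrac1{N-1}\sum_{j\ne i}K_\varepsilon(x_i-x_j),\qquad
\bar u:=K_\varepsilon*\omega_\varepsilon .
\]

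\emph{Step 1: differentiate.} We have
\[
\tfrac{d}{dt}\!\int \tilde F\log(\tilde F/G)=\int \partial_t\tilde F\,h-\int \tilde F\,\partial_t G/G,
\]
since $\int\partial_t\tilde F=0$. We then substitute the two equations.

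\emph{Step 2: diffusion.} The diffusion terms combine as usual. Integrating by parts gives
\[
\sigma\sum_i\int\Delta_{x_i}\tilde F\,h-\sigma\sum_i\int\tilde F\,\Delta_{x_i}G/G .
\]
For the second term we use $\Delta G/G=\Delta\log G+|\nabla\log G|^2$ and integrate by parts once more. Together these yield exactly $-\sigma\sum_i\int\tilde F|\nabla_{x_i}h|^2=-\sigma N\mathcal I_N$.

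\emph{Step 3: transport.} Since $\operatorname{div}\bar u=0$ and $\operatorname{div}_{x_i}b_i^{\xi_k}=0$, both transport terms can be written either in divergence or in advection form. On the $\tilde F$ side, the averaged equation contains $\mathbb E_{\xi_k}\operatorname{div}_{x_i}(b_i^{\xi_k}\tilde F^{N,\xi_k})$. Integrating by parts against $h$ gives
\[
\sum_i\mathbb E_{\xi_k}\int \tilde F^{N,\xi_k}\,b_i^{\xi_k}\cdot\nabla_{x_i}h .
\]
On the $G$ side, $-\partial_tG/G$ contributes $+\sum_i \bar u(x_i)\cdot\nabla_{x_i}\log G$ plus diffusion. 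Integrated against $\tilde F$, this gives
\[
\sum_i\int\tilde F\,\bar u(x_i)\cdot\nabla\log\omega_\varepsilon(x_i).
\]

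\emph{Step 4: add and subtract $b_i^N$.} In the first expression we insert $b_i^N$. Because $b_i^N$ does not depend on $\xi_k$ and $\mathbb E_{\xi_k}\tilde F^{N,\xi_k}=\tilde F$, the inserted term becomes $\sum_i\int\tilde F\, b_i^N\cdot\nabla_{x_i}h$. The leftover piece is exactly $N\mathcal R_{\mathrm{RBM}}$. Next we split $\nabla_{x_i}h=\nabla_{x_i}\log\tilde F-\nabla\log\omega_\varepsilon(x_i)$. The part with $\nabla\log\tilde F$ reads $\int b_i^N\cdot\nabla_{x_i}\tilde F$, which vanishes after integration by parts since $\operatorname{div}_{x_i}b_i^N=0$ (the divergence of $K_\varepsilon$ is zero). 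What remains is
\[
-\sum_i\int\tilde F\,(b_i^N-\bar u(x_i))\cdot\nabla\log\omega_\varepsilon(x_i)=N\mathcal R_{\mathrm{mf}} .
\]
Dividing by $N$ yields \eqref{eq:entropy-evolution-rbm-vortex}.

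\emph{Main obstacle: justification.} The main difficulty is rigor rather than algebra: the integrations by parts must have vanishing boundary terms, and the time derivative must be exchanged with the integral. For $t\in(t_k,t_{k+1})$ the drifts are smooth and bounded, with $\|K_\varepsilon\|_\infty\lesssim\varepsilon^{-1}$. Parabolic regularity therefore makes $\tilde F^{N,\xi_k}$ smooth with Gaussian-type moments. The growth bounds for $\nabla\log\omega_\varepsilon$ and $\nabla^2\log\omega_\varepsilon$ (which are at most polynomial, as will be shown uniformly later, Proposition~\ref{prop:uniform-reference-estimates}) keep every integrand integrable. To make this fully rigorous I would first work with a truncation $\chi_R$ and then let $R\to\infty$, or else regularize the initial data. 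Finally, the finite average over $\xi_k$ commutes with all operations.
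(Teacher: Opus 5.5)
Your proposal is correct and follows the paper's proof essentially line by line: differentiate $\mathcal H_N$, substitute the averaged Liouville equation and the tensorized equation, and collapse the diffusion terms into $-\sigma\mathcal I_N$. Then insert $\frac1{N-1}\sum_{j\ne i}K_\varepsilon(x_i-x_j)$ using $\mathbb E_{\xi_k}\tilde F^{N,\xi_k}_\varepsilon=\tilde F^N_\varepsilon$, and discard the $\nabla_{x_i}\log\tilde F^N_\varepsilon$ part by divergence-freeness.

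The only difference is cosmetic. You pair $(K_\varepsilon*\omega_\varepsilon)(x_i)$ directly with $\nabla\log\omega_\varepsilon(x_i)$, whereas the paper first adds the vanishing term $\int\tilde F^N_\varepsilon (K_\varepsilon*\omega_\varepsilon)(x_i)\cdot\nabla_{x_i}\log\tilde F^N_\varepsilon$. Your justification remarks go beyond the paper, which carries out this computation only formally.
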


\begin{proof}
Since both densities have unit mass, differentiation of the normalized
relative entropy yields
\begin{equation}\label{eq:entropy-derivative-first}
\frac{\mathrm{d}}{\mathrm{~d} t} \mathcal{H}_N\left(\tilde{F}_{\varepsilon}^N \mid \omega_{\varepsilon}^{\otimes N}\right)=\frac{1}{N} \int_{\mathbb{R}^{2 N}} \partial_t \tilde{F}_{\varepsilon}^N \log \frac{\tilde{F}_{\varepsilon}^N}{\omega_{\varepsilon}^{\otimes N}} \mathrm{~d} X^N-\frac{1}{N} \int_{\mathbb{R}^{2 N}} \tilde{F}_{\varepsilon}^N \frac{\partial_t \omega_{\varepsilon}^{\otimes N}}{\omega_{\varepsilon}^{\otimes N}} \mathrm{~d} X^N.
\end{equation}
Recall the joint law $\tilde F^N_\varepsilon$ and tensorized law $\omega_\varepsilon^{\otimes N}$ satisfy \eqref{rbm-blob-liouville-pde-average} and \eqref{intro-tensorized-pde}, respectively. Substituting \eqref{rbm-blob-liouville-pde-average} and \eqref{intro-tensorized-pde} into
\eqref{eq:entropy-derivative-first}, and integrating by parts, gives the
transport contribution
\begin{align}
\label{eq:transport-before-combination}
&
\frac1N
\sum_{i=1}^N
\int_{\mathbb R^{2N}}
\mathbb E_{\xi_k}
\left[
\tilde F^{N,\xi_k}_\varepsilon
\left(
  \frac1{p-1}
  \sum_{\substack{j\in\xi_k(i), j\ne i}}
  K_\varepsilon(x_i-x_j)
\right)
\right]
\cdot
\nabla_{x_i}
\log
\frac{
  \tilde F^N_\varepsilon
}{
  \omega_\varepsilon^{\otimes N}
}
\,\mathrm dX^N
\notag\\
&\qquad
+
\frac1N
\sum_{i=1}^N
\int_{\mathbb R^{2N}}
\tilde F^N_\varepsilon
(K_\varepsilon*\omega_\varepsilon)(x_i)
\cdot
\nabla_{x_i}
\log
\omega_\varepsilon^{\otimes N}
\,\mathrm dX^N.
\end{align}
Because $
  \operatorname{div}
  (K_\varepsilon*\omega_\varepsilon)
  =
  0, $
one has
\begin{align*}
\int_{\mathbb R^{2N}}
\tilde F^N_\varepsilon
(K_\varepsilon*\omega_\varepsilon)(x_i)
\cdot
\nabla_{x_i}
\log
\tilde F^N_\varepsilon
\,\mathrm dX^N
=
\int_{\mathbb R^{2N}}
(K_\varepsilon*\omega_\varepsilon)(x_i)
\cdot
\nabla_{x_i}
\tilde F^N_\varepsilon
\,\mathrm dX^N
=
0.
\end{align*}
Consequently, \eqref{eq:transport-before-combination} equals
\begin{equation}\label{eq:transport-combined}
    \frac{1}{N} \sum_{i=1}^N \int_{\mathbb{R}^{2 N}} \mathbb{E}_{\xi_k}\left[\tilde{F}_{\varepsilon}^{N, \xi_k}\left(\frac{1}{p-1} \sum_{\substack{j \in \xi_k(i), j \neq i}} K_{\varepsilon}\left(x_i-x_j\right)-\left(K_{\varepsilon} * \omega_{\varepsilon}\right)\left(x_i\right)\right)\right] \cdot \nabla_{x_i} \log \frac{\tilde{F}_{\varepsilon}^N}{\omega_{\varepsilon}^{\otimes N}} \mathrm{~d} X^N.
\end{equation}
For completeness, the diffusion contribution is
\begin{align*}-
\frac{\sigma}{N}
\sum_{i=1}^N
\int_{\mathbb R^{2N}}
\nabla_{x_i}
\tilde F^N_\varepsilon
\cdot
\nabla_{x_i}
\log
\frac{
  \tilde F^N_\varepsilon
}{
  \omega_\varepsilon^{\otimes N}
}
\,\mathrm dX^N
-
\frac{\sigma}{N}
\sum_{i=1}^N
\int_{\mathbb R^{2N}}
\tilde F^N_\varepsilon
\frac{
  \Delta_{x_i}
  \omega_\varepsilon^{\otimes N}
}{
  \omega_\varepsilon^{\otimes N}
}
\,\mathrm dX^N.
\end{align*}
Using
\[
  \frac{
    \Delta_{x_i}
    \omega_\varepsilon^{\otimes N}
  }{
    \omega_\varepsilon^{\otimes N}
  }
  =
  \Delta_{x_i}
  \log
  \omega_\varepsilon^{\otimes N}
  +
  \left|
  \nabla_{x_i}
  \log
  \omega_\varepsilon^{\otimes N}
  \right|^2,
\]
and integrating the Laplacian term by parts, the diffusion contribution
becomes
\[
  -
  \frac{\sigma}{N}
  \sum_{i=1}^N
  \int_{\mathbb R^{2N}}
  \tilde F^N_\varepsilon
  \left|
  \nabla_{x_i}
  \log
  \frac{
    \tilde F^N_\varepsilon
  }{
    \omega_\varepsilon^{\otimes N}
  }
  \right|^2
  \,\mathrm dX^N
  =
  -\sigma\mathcal I_N(t).
\]
Combining this identity with \eqref{eq:transport-combined} gives the exact
entropy identity before decomposition.

Insert $\frac1{N-1}
  \sum_{j\ne i}
  K_\varepsilon(x_i-x_j)$ between the batch force and the mean field force in
\eqref{eq:transport-combined}. Since $
  \mathbb E_{\xi_k}
  \tilde F^{N,\xi_k}_\varepsilon
  =
  \tilde F^N_\varepsilon$, this gives
\[
  \frac{\mathrm d}{\mathrm dt}
  \mathcal H_N
  \bigl(
  \tilde F^N_\varepsilon
  \mid
  \omega_\varepsilon^{\otimes N}
  \bigr)
  =
  -\sigma\mathcal I_N(t)
  +
  \mathcal R_{\mathrm{RBM}}(t)
  +
  \mathcal R_{\mathrm{mf}}(t),
\]
where
\begin{align*}
\mathcal R_{\mathrm{RBM}}(t)
={}&
\frac1N
\sum_{i=1}^N
\mathbb E_{\xi_k}
\int_{\mathbb R^{2N}}
\tilde F^{N,\xi_k}_\varepsilon
\Bigg[
  \frac1{p-1}
  \sum_{\substack{j\in\xi_k(i), j\ne i}}
  K_\varepsilon(x_i-x_j)
  -
  \frac1{N-1}
  \sum_{j\ne i}
  K_\varepsilon(x_i-x_j)
\Bigg]
\notag\\
&\hspace{3.2cm}
\cdot
\nabla_{x_i}
\log
\frac{
  \tilde F^N_\varepsilon
}{
  \omega_\varepsilon^{\otimes N}
}
\,\mathrm dX^N,
\end{align*}
and
\begin{align}
\label{eq:R-mf-linear}
\mathcal R_{\mathrm{mf}}(t)
={}&
\frac1N
\sum_{i=1}^N
\int_{\mathbb R^{2N}}
\tilde F^N_\varepsilon
\Bigg[
  \frac1{N-1}
  \sum_{j\ne i}
  K_\varepsilon(x_i-x_j)
  -
  (K_\varepsilon*\omega_\varepsilon)(x_i)
\Bigg]
\notag\\
&\hspace{3.2cm}
\cdot
\nabla_{x_i}
\log
\frac{
  \tilde F^N_\varepsilon
}{
  \omega_\varepsilon^{\otimes N}
}
\,\mathrm dX^N.
\end{align}
Finally, the vector field in square brackets in
\eqref{eq:R-mf-linear} is divergence-free with respect to $x_i$. Hence its
contribution against
$\nabla_{x_i}\log\tilde F^N_\varepsilon$ vanishes by integration by parts.
Since $
  \nabla_{x_i}
  \log
  \omega_\varepsilon^{\otimes N}(X^N)
  =
  \nabla
  \log
  \omega_\varepsilon(x_i),$
this gives \eqref{eq:R-mf-score}.
\end{proof}

\section{The random batch contribution}
\label{sec:random-batch-contribution}

In this section, we control the random batch term $\mathcal{R}_{\mathrm{RBM}}$ \eqref{eq:R-rbm-linear} in the relative-entropy identity. Throughout this section, set
\(2\le p\le N\), \(p\mid N\), and \(t_k=k\tau\). For the random batch partition $\xi_k$ at time $t_k$, let \(\xi_k(i)\) be the unique batch containing
\(i\), and for notational convenience, define
\begin{equation}
\label{eq:rbm-batch-drift}
 B_i^{\xi_k}(X^N)
 :=
 \frac1{p-1}
 \sum_{\substack{j\in\xi_k(i)\\j\ne i}}
 K_\varepsilon(x_i-x_j),
 \qquad
 \overline B_i(X^N)
 :=
 \frac1{N-1}\sum_{j\ne i}K_\varepsilon(x_i-x_j).
\end{equation}
We also write
\[
 \boldsymbol B^{\xi_k}:=(B_1^{\xi_k},\ldots,B_N^{\xi_k}),
 \qquad
 |\boldsymbol v|^2:=\sum_{i=1}^N|v_i|^2,
\]
and
\[
 f_t^{\xi_k}:=\widetilde F_\varepsilon^{N,\xi_k}(t),
 \qquad
 f_t:=\widetilde F_\varepsilon^N(t)
      =\mathbb E_{\xi_k}f_t^{\xi_k}.
\]
Then the Liouville equation \eqref{eq:fixed-batch-fp-proof} can be written as:
\begin{equation}
\label{eq:rbm-fixed-partition-fp}
 \partial_t f_t^{\xi_k}
 +
 \operatorname{div}_{X^N}(\boldsymbol B^{\xi_k} f_t^{\xi_k})
 =
 \sigma\Delta_{X^N}f_t^{\xi_k},
 \qquad
 f_{t_k}^{\xi_k}=f_{t_k}.
\end{equation}
Here \(\operatorname{div}_{X^N}\) and \(\Delta_{X^N}\) denote the divergence and Laplacian in the \(2N\)-dimensional space of particle positions \(X^N=(x_1,\ldots,x_N)\).
Since \(\operatorname{div}K_\varepsilon=0\),
\begin{equation}
\label{eq:rbm-full-drift-div-free}
 \operatorname{div}_{X^N}\boldsymbol B^{\xi_k}=0
 \qquad
 \text{for every partition } \xi_k.
\end{equation}
Then the random batch contribution \eqref{eq:R-rbm-linear} obtained from the entropy identity is
\begin{equation}
\label{eq:rbm-centered-term}
 \mathcal R_{\mathrm{RBM}}(t)
 =
 \frac1N\sum_{i=1}^N\mathbb E_{\xi_k}
 \int_{\mathbb R^{2N}}
 f_t^{\xi_k}
 \bigl(B_i^{\xi_k}-\overline B_i\bigr)
 \cdot  \nabla_{x_i}  \log  \frac{f_t}  {\omega_\varepsilon^{\otimes N}}\,\mathrm dX^N.
\end{equation}

\subsection{A locally coupled auxiliary partition}

The main device to bound term \eqref{eq:rbm-centered-term} is a local coupling on the space of batch partitions which is similar to \cite{LiWangJin2025}. For
each fixed particle label \(i\), the auxiliary partition is either left
unchanged or differs from the current partition only in the two batches used
in the exchange. We can prove that the auxiliary partition has the same marginal law as the
current partition.
Let \(\mathfrak P_{N,p}\) denote the set of partitions of
\(\{1,\ldots,N\}\) into batches of size \(p\), and let \(\mu_{N,p}\) be the
uniform probability measure on \(\mathfrak P_{N,p}\). For each fixed \(i\),
we now construct an auxiliary full partition \(\xi_{k,i}'\). The construction
depends on \(i\); in particular, the terms in the sum over \(i\) do not use a
single common auxiliary partition.

\begin{lemma}[Local coupling of batch partitions]
\label{lem:rbm-local-partition-coupling}
Assume \(2\le p<N\). Fix \(i\in\{1,\ldots,N\}\). Let
\(\xi_k\sim\mu_{N,p}\), choose
$
 U\sim
 \operatorname{Unif}
 \bigl(\{1,\ldots,N\}\setminus\{i\}\bigr)
$
independently of \(\xi_k\), and define
\(\xi_{k,i}'\) as follows:
\begin{itemize}
  \item If \(U\in\xi_k(i)\setminus\{i\}\), set \(\xi_{k,i}'=\xi_k\).
  \item If \(U\notin\xi_k(i)\), let \(S=\xi_k(i)\) and \(C=\xi_k(U)\) be the two batches containing \(i\) and \(U\). 
  Replace \(S\) and \(C\) by 
\begin{equation}\label{eq:rbm-two-batch-exchange} 
 S':=\{i\}\cup(C\setminus\{U\}),
 \qquad
 C':=\{U\}\cup(S\setminus\{i\}).
\end{equation}
Equivalently,
\[
 \xi_{k,i}'
 =
 \bigl(\xi_k\setminus\{S,C\}\bigr)
 \cup
 \{S',C'\}.
\]
\end{itemize}
Then the following statements hold.
\begin{enumerate}
\item The pair is exchangeable:
\begin{equation}
\label{eq:rbm-partition-exchangeability}
  (\xi_k,\xi_{k,i}')
  \stackrel{\mathrm d}{=}
  (\xi_{k,i}',\xi_k).
\end{equation}
In particular, \(\xi_{k,i}'\sim\mu_{N,p}\).

\item For every \(\ell\ne i\),
\begin{equation}
\label{eq:rbm-conditional-inclusion}
 \mathbb P
 \bigl(
 \ell\in\xi_{k,i}'(i)\mid\xi_k
 \bigr)
 =
 \frac{p-1}{N-1}.
\end{equation}
Consequently,
\begin{equation}
\label{eq:rbm-conditional-unbiasedness}
 \mathbb E
 \bigl[
 B_i^{\xi_{k,i}'}\mid\xi_k
 \bigr]
 =
 \overline B_i.
\end{equation}

\item On the no-exchange event, set
\[
 \Lambda_i:=\varnothing.
\]
On the exchange event, set
\begin{equation}
\label{eq:rbm-affected-label-set}
 \Lambda_i
 :=
 \xi_k(i)\cup\xi_k(U)
 =
 \xi_{k,i}'(i)\cup\xi_{k,i}'(U).
\end{equation}
Then \(|\Lambda_i|=2p\) on the exchange event. Moreover,
\begin{equation}
\label{eq:rbm-drift-localization}
 B_a^{\xi_k}=B_a^{\xi_{k,i}'}
 \qquad
 \text{for every }a\notin\Lambda_i,
\end{equation}
and 
\begin{equation}
\label{eq:rbm-full-drift-difference}
 \bigl|
 \boldsymbol B^{\xi_k}
 -
 \boldsymbol B^{\xi_{k,i}'}
 \bigr|^2
 \le
 8pM_\varepsilon^2,
\end{equation}
with $
 M_\varepsilon
 :=
 \|K_\varepsilon\|_{L^\infty(\mathbb R^2)}.
$
\item For every fixed partition
\(\xi_k\) and every choice of nonnegative numbers
\(A_1,\ldots,A_N\),
\begin{equation}
\label{eq:rbm-combinatorial-count}
 \sum_{i=1}^N
 \mathbb E
 \left[
 \left.
 \sum_{a\in\Lambda_i}A_a
 \,\right|\,
 \xi_k
 \right]
 =
 \frac{2p(N-p)}{N-1}
 \sum_{a=1}^NA_a
 \le
 2p\sum_{a=1}^NA_a.
\end{equation}
The same identity remains valid when  \(A_a\) are nonnegative functions
of the fixed partition \(\xi_k\).
\end{enumerate}
\end{lemma}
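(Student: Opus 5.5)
The plan is to prove the four items in order. The key device for (1) is to realize the coupling as a measure-preserving involution on an enlarged space.

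\textbf{Item (1).} Consider the map $\Phi(\xi,u)=(\xi',u)$, where $\xi'=\xi'_{\cdot,i}$ is built from $(\xi,u)$ by the exchange rule. In the no-exchange case $\Phi$ is the identity. In the exchange case, $S'=\{i\}\cup(C\setminus\{u\})$ and $C'=\{u\}\cup(S\setminus\{i\})$. Applying the rule again with the same $u$, we have $u\notin S'$, so we exchange $S'$ and $C'$. This gives back $\{i\}\cup(C'\setminus\{u\})=S$ and $\{u\}\cup(S'\setminus\{i\})=C$. Hence $\Phi$ is an involution on $\mathfrak P_{N,p}\times(\{1,\dots,N\}\setminus\{i\})$. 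Since this set is finite and $\Phi$ is a bijection, $\Phi$ preserves the uniform product measure $\mu_{N,p}\otimes\mathrm{Unif}$. Therefore $(\xi,\xi')=(\xi,\pi_1\Phi(\xi,U))$ and $(\xi',\xi)$ have the same law. Projecting to the first coordinate gives $\xi'\sim\mu_{N,p}$.

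\textbf{Item (2).} Condition on $\xi_k$ and fix $\ell\neq i$, then split according to whether $\ell\in\xi_k(i)$.
\begin{itemize}
\item If $\ell\in\xi_k(i)$: then $\ell\in\xi'(i)$ exactly when either no exchange occurs ($U\in\xi_k(i)\setminus\{i\}$, probability $\frac{p-1}{N-1}$), or an exchange occurs and $\ell$ stays. But on the exchange event $\ell\in S\setminus\{i\}$ moves to $C'$, so it leaves. The total probability is $\frac{p-1}{N-1}$.
\item If $\ell\notin\xi_k(i)$: then $\ell\in\xi'(i)=S'$ exactly when $U\in\xi_k(\ell)\setminus\{\ell\}$. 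There are $p-1$ such choices, so the probability is again $\frac{p-1}{N-1}$.
\end{itemize}
Then $\mathbb E[B_i^{\xi'}\mid\xi_k]=\frac1{p-1}\sum_{\ell\ne i}\mathbb P(\ell\in\xi'(i)\mid\xi_k)K_\varepsilon(x_i-x_\ell)=\overline B_i$.

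\textbf{Item (3).} On the exchange event, $S$ and $C$ are disjoint batches of size $p$, so $|\Lambda_i|=2p$. Moreover $S\cup C=S'\cup C'$, which gives the two descriptions of $\Lambda_i$.
\begin{itemize}
\item For $a\notin\Lambda_i$, the batch of $a$ is untouched, so $B_a^{\xi}=B_a^{\xi'}$.
\item For $a\in\Lambda_i$, each $|B_a|\le M_\varepsilon$, so $|B_a^{\xi}-B_a^{\xi'}|^2\le 4M_\varepsilon^2$.
\end{itemize}
Summing over the $2p$ labels in $\Lambda_i$ gives $8pM_\varepsilon^2$. In the no-exchange case the difference is zero.

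\textbf{Item (4).} Fix $\xi_k$. For each $i$, the exchange event corresponds to $U$ ranging over the $N-p$ labels outside $\xi_k(i)$, each with probability $\frac1{N-1}$. Thus
\[\sum_i\mathbb E\Big[\sum_{a\in\Lambda_i}A_a\Bigm|\xi_k\Big]=\frac1{N-1}\sum_i\sum_{u\notin\xi_k(i)}\Big(\sum_{a\in\xi_k(i)}A_a+\sum_{a\in\xi_k(u)}A_a\Big).\]
We now count, for a fixed $a$, the ordered pairs $(i,u)$ with $u\notin\xi_k(i)$ in which $a$ is counted:
\begin{itemize}
\item the first sum counts $a$ when $i\in\xi_k(a)$, giving $p(N-p)$ pairs;
\item the second sum counts $a$ when $u\in\xi_k(a)$ and $i\notin\xi_k(a)$, giving $(N-p)p$ pairs.
\end{itemize}
The total is $2p(N-p)$, which yields the identity. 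The inequality follows from $N-p\le N-1$. Since $A_a$ enters only after conditioning on $\xi_k$, the same computation applies when the $A_a$ are functions of $\xi_k$.

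\textbf{Main obstacle.} The delicate step is the involution check in (1), namely that re-applying the exchange with the same $U$ recovers the original partition. Once that holds, the remaining items are direct counting.
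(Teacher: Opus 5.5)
Your proposal is correct and follows the paper's proof essentially step by step: the same involution on partitions (which you lift to the product space with $U$) gives exchangeability by reindexing the uniform measure, the same case split on whether $\ell\in\xi_k(i)$ gives the inclusion probability $\frac{p-1}{N-1}$, and the same double count $p(N-p)+p(N-p)$ gives the combinatorial identity. Your explicit check that $u\notin S'$, so that the second application of the rule really exchanges back, fills in a detail the paper only asserts.
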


\begin{proof}
For fixed \(u\ne i\), denote by \(T_{i,u}\) the map on
\(\mathfrak P_{N,p}\) defined by the construction above. The map is an
involution: applying the same exchange twice restores the original two
batches, while on the no-exchange event it is the identity. Hence, for every
bounded function \(\Phi\),
\begin{align*}
 &\mathbb E\,\Phi(\xi_k,\xi_{k,i}',U)
 \\
 &\quad=
 \frac{1}{|\mathfrak P_{N,p}|(N-1)}
 \sum_{\xi\in\mathfrak P_{N,p}}\sum_{u\ne i}
 \Phi\bigl(\xi,T_{i,u}\xi,u\bigr)
 \\
 &\quad=
 \frac{1}{|\mathfrak P_{N,p}|(N-1)}
 \sum_{\eta\in\mathfrak P_{N,p}}\sum_{u\ne i}
 \Phi\bigl(T_{i,u}\eta,\eta,u\bigr)
 \\
 &\quad=
 \mathbb E\,\Phi(\xi_{k,i}',\xi_k,U).
\end{align*}
This proves \eqref{eq:rbm-partition-exchangeability} and the equality of the
marginal laws.

Fix \(\ell\ne i\) and condition on \(\xi_k\). If $
 \ell\in\xi_k(i)\setminus\{i\}$, then \(\ell\) remains in the batch of \(i\) exactly when
$
 U\in\xi_k(i)\setminus\{i\},
$
which gives \(p-1\) admissible choices of \(U\).
If $
 \ell\notin\xi_k(i)$, then \(\ell\) belongs to \(\xi_{k,i}'(i)\) exactly when
$
 U\in\xi_k(\ell)\setminus\{\ell\}$, again giving \(p-1\) admissible choices. Since \(U\) is uniform over
\(N-1\) labels, \eqref{eq:rbm-conditional-inclusion} follows. 

Moreover, pointwise in \(X^N\) one has
\begin{align*}
 \mathbb E
 \bigl[
 B_i^{\xi_{k,i}'}\mid\xi_k
 \bigr]
 &=
 \frac1{p-1}
 \sum_{\ell\ne i}
 \mathbb P
 \bigl(
 \ell\in\xi_{k,i}'(i)\mid\xi_k
 \bigr)
 K_\varepsilon(x_i-x_\ell)
 \\
 &=
 \frac1{N-1}
 \sum_{\ell\ne i}
 K_\varepsilon(x_i-x_\ell)
 \\
 &=
 \overline B_i,
\end{align*}
which proves \eqref{eq:rbm-conditional-unbiasedness}.

If no exchange occurs, all drift components agree. Otherwise, only the two
batches in \eqref{eq:rbm-two-batch-exchange} are modified, so
\eqref{eq:rbm-drift-localization} holds. Moreover, for every partition
\(\xi_k\) and every label \(a\),
\[
 |B_a^{\xi_k}|
 \le
 \frac1{p-1}
 \sum_{\substack{b\in\xi_k(a)\\b\ne a}}
 |K_\varepsilon(x_a-x_b)|
 \le
 M_\varepsilon.
\]
Thus $
 |B_a^{\xi_k}-B_a^{\xi_{k,i}'}|
 \le
 2M_\varepsilon$ on \(\Lambda_i\), and
\[
 \bigl|
 \boldsymbol B^{\xi_k}
 -
 \boldsymbol B^{\xi_{k,i}'}
 \bigr|^2
 =
 \sum_{a\in\Lambda_i}
 |B_a^{\xi_k}-B_a^{\xi_{k,i}'}|^2
 \le
 2p(2M_\varepsilon)^2,
\]
which is \eqref{eq:rbm-full-drift-difference}.

It remains to prove \eqref{eq:rbm-combinatorial-count}. Fix a partition
\(\xi_k\), the no-exchange event gives zero because
\(\Lambda_i=\varnothing\). Pick one \(j\notin\xi_k(i)\) with
probability \(1/(N-1)\), hence we have the following decomposition
\begin{align*}
 \sum_{i=1}^N
 \mathbb E
 \left[
 \left.
 \sum_{a\in\Lambda_i}A_a
 \,\right|\,
 \xi_k
 \right] =
 \frac1{N-1}
 \sum_{i=1}^N
 \sum_{j\notin\xi_k(i)}
 \left(
 \sum_{a\in\xi_k(i)}A_a
 +
 \sum_{a\in\xi_k(j)}A_a
 \right).
\end{align*}
Note that in each of the two sums in parentheses, a fixed label \(a\) is counted
\(p(N-p)\) times. Indeed, one chooses one label in the batch containing \(a\) in
\(p\) ways and one label outside that batch in \(N-p\) ways. Therefore the
last display equals
\[
 \frac{2p(N-p)}{N-1}
 \sum_{a=1}^NA_a.
\]
This proves \eqref{eq:rbm-combinatorial-count}.
\end{proof}
The above fact about the constructed  auxiliary partition $\xi_{k,i}^\prime$ is crucial to deal with the random batch term $\cR_{\mathrm{RBM}}$.
Use the tower property, one can rewrite the random batch term \eqref{eq:rbm-centered-term} as follows:
\begin{lemma}[Symmetric representation of the random batch term]
\label{lem:rbm-symmetric-representation}
For each fixed \(i\), let \(\xi_{k,i}'\) be the locally coupled
auxiliary partition constructed above, and denote by
$
  \mathbb E_i
  :=
  \mathbb E_{\xi_k}
  \mathbb E_{\xi_{k,i}'\mid\xi_k}
$
expectation with respect to its coupled joint law. Then
\begin{align}
\label{eq:rbm-symmetric-representation}
 \mathcal R_{\mathrm{RBM}}(t)
 ={}&
 \frac1{2N}
 \sum_{i=1}^N
 \mathbb E_i
 \int_{\mathbb R^{2N}}
 \bigl(
 f_t^{\xi_k}
 -
 f_t^{\xi_{k,i}'}
 \bigr)
 \bigl(
 B_i^{\xi_k}
 -
 B_i^{\xi_{k,i}'}
 \bigr)\cdot
 \nabla_{x_i}
 \log
 \frac{f_t}
 {\omega_\varepsilon^{\otimes N}}
 \,\mathrm dX^N.
\end{align}
If \(p=N\), then \(B_i^{\xi_k}=\overline B_i\) and
\(\mathcal R_{\mathrm{RBM}}\equiv0\).
\end{lemma}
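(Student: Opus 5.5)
Write $g_t:=\nabla_{x_i}\log(f_t/\omega_\varepsilon^{\otimes N})$ for brevity. This function depends only on the averaged law $f_t$ and not on the partition, which is what makes the symmetrization possible. The plan is to start from \eqref{eq:rbm-centered-term} and use the conditional unbiasedness \eqref{eq:rbm-conditional-unbiasedness} to replace $\overline B_i$ by $B_i^{\xi_{k,i}'}$. Then the exchangeability \eqref{eq:rbm-partition-exchangeability} symmetrizes the resulting expression.

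\emph{Step 1 (replacing the mean force).} For fixed $i$, the function $\overline B_i(X^N)$ equals $\mathbb E[B_i^{\xi_{k,i}'}\mid\xi_k]$ pointwise in $X^N$. Since $f_t^{\xi_k}$ and $g_t$ are $\xi_k$-measurable (deterministic given $\xi_k$), the tower property gives
\[
\mathbb E_{\xi_k}\int f_t^{\xi_k}\,\overline B_i\cdot g_t
=\mathbb E_i\int f_t^{\xi_k}\,B_i^{\xi_{k,i}'}\cdot g_t .
\]
Exchanging $\mathbb E[\cdot\mid\xi_k]$ with the $X^N$-integral is justified by Fubini. The integrand is bounded by $M_\varepsilon f_t^{\xi_k}|g_t|$, and this is integrable whenever $\mathcal I_N(t)<\infty$, by Cauchy--Schwarz. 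Hence the $i$-th summand becomes $\mathbb E_i\int f_t^{\xi_k}(B_i^{\xi_k}-B_i^{\xi_{k,i}'})\cdot g_t$.

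\emph{Step 2 (exchangeability).} Define $\Psi(\xi,\eta):=\int f_t^{\xi}(B_i^{\xi}-B_i^{\eta})\cdot g_t\,\mathrm dX^N$. This is a measurable function of the pair of partitions, because $f_t^{\xi}$ is the solution of \eqref{eq:rbm-fixed-partition-fp} with drift determined by $\xi$ and the common initial datum $f_{t_k}$. By \eqref{eq:rbm-partition-exchangeability}, $\mathbb E_i\Psi(\xi_k,\xi_{k,i}')=\mathbb E_i\Psi(\xi_{k,i}',\xi_k)$. The second expression equals $-\mathbb E_i\int f_t^{\xi_{k,i}'}(B_i^{\xi_k}-B_i^{\xi_{k,i}'})\cdot g_t$. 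Averaging the two representations gives
\[
\tfrac12\,\mathbb E_i\int\bigl(f_t^{\xi_k}-f_t^{\xi_{k,i}'}\bigr)\bigl(B_i^{\xi_k}-B_i^{\xi_{k,i}'}\bigr)\cdot g_t .
\]
Summing over $i$ and multiplying by $1/N$ yields \eqref{eq:rbm-symmetric-representation}. The key point here is that the same $g_t$ appears on both sides, since $g_t$ does not depend on which partition is used.

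\emph{Step 3 (the case $p=N$).} When $p=N$ the partition is trivial, so $\xi_k(i)=\{1,\ldots,N\}$ and $B_i^{\xi_k}=\overline B_i$. The integrand in \eqref{eq:rbm-centered-term} then vanishes identically. Lemma~\ref{lem:rbm-local-partition-coupling} assumed $p<N$, but this case needs no coupling.

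The main obstacle is the integrability justification behind the Fubini and tower-property steps, which requires $\mathcal I_N(t)<\infty$ and well-posedness of the $f_t^{\xi}$. I would handle it by working at a.e.\ $t$ on which the Fisher information is finite, which is consistent with its later use in a time-integrated inequality. Alternatively, one can first truncate $g_t$ and then pass to the limit.
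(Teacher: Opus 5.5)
Your proposal is correct and is essentially the paper's proof. You use conditional unbiasedness with the tower property to replace $\overline B_i$ by $B_i^{\xi_{k,i}'}$, then exchangeability of $(\xi_k,\xi_{k,i}')$ and averaging of the two representations, relying on the fact that $\nabla_{x_i}\log(f_t/\omega_\varepsilon^{\otimes N})$ does not depend on the partition. Your added integrability discussion goes beyond what the paper writes and is harmless: the partition space is finite, so every interchange of expectation and integral is a finite sum, and $\mathbb E_{\xi_k}\int f_t^{\xi_k}|g_t|\,\mathrm dX^N=\int f_t|g_t|\,\mathrm dX^N$ is controlled by $\mathcal I_N(t)$.
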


\begin{proof}
For each fixed \(i\), the conditional unbiasedness
\eqref{eq:rbm-conditional-unbiasedness} and the tower property give
\begin{align*}
 \mathcal R_{\mathrm{RBM}}(t)
 ={}&
 \frac1N\sum_{i=1}^N
 \mathbb E_i
 \int_{\mathbb R^{2N}}
 f_t^{\xi_k}
 \bigl(
 B_i^{\xi_k}
 -
 B_i^{\xi_{k,i}'}
 \bigr)
 \cdot
 \nabla_{x_i}
 \log
 \frac{f_t}
 {\omega_\varepsilon^{\otimes N}}
 \,\mathrm dX^N.
\end{align*}
By \eqref{eq:rbm-partition-exchangeability}, the pair
\((\xi_k,\xi_{k,i}')\) is exchangeable. Therefore,
\begin{align*}
 &\mathbb E_i
 \int
 f_t^{\xi_k}
 \bigl(
 B_i^{\xi_k}
 -
 B_i^{\xi_{k,i}'}
 \bigr)
 \cdot \nabla_{x_i}\log
 \frac{f_t}
 {\omega_\varepsilon^{\otimes N}}\,\mathrm dX^N=
 \mathbb E_i
 \int
 f_t^{\xi_{k,i}'}
 \bigl(
 B_i^{\xi_{k,i}'}
 -
 B_i^{\xi_k}
 \bigr)
 \cdot \nabla_{x_i}\log
 \frac{f_t}
 {\omega_\varepsilon^{\otimes N}}\,\mathrm dX^N.
\end{align*}
Averaging these two representations gives
\eqref{eq:rbm-symmetric-representation}.
\end{proof}

\subsection{Estimate of the integrated random batch term}
Before stating the estimate, we introduce some notation. For a probability density \(F\) on \(\mathbb R^{2N}\), define the normalized
Boltzmann entropy, second moment, and absolute Fisher information by
\begin{align}
 \mathcal S_N(F)
 &:=
 \frac1N
 \int_{\mathbb R^{2N}}F\log F\,\mathrm dX^N,
 \label{eq:rbm-Boltzmann-entropy}
 \\
 \mathcal M_{2,N}(F)
 &:=
 \frac1N
 \int_{\mathbb R^{2N}}
 \sum_{a=1}^N|x_a|^2F\,\mathrm dX^N,
 \label{eq:rbm-second-moment}
 \\
 \mathcal J_N(F)
 &:=
 \frac1N
 \sum_{a=1}^N
 \int_{\mathbb R^{2N}}
 F|\nabla_{x_a}\log F|^2\,\mathrm dX^N.
 \label{eq:rbm-absolute-fisher}
\end{align}
For \(\Lambda\subset\{1,\ldots,N\}\), also set
\begin{equation}
\label{eq:rbm-local-fisher}
 \mathcal J_\Lambda(F)
 :=
 \sum_{a\in\Lambda}
 \int_{\mathbb R^{2N}}
 F|\nabla_{x_a}\log F|^2\,\mathrm dX^N.
\end{equation}

\begin{proposition}[Integrated random batch estimate]
\label{prop:rbm-integrated-estimate}
Assume the initial density $f_0:=\widetilde F_\varepsilon^N(0)$ satisfies
\[
 \mathcal S_N(f_0)<\infty,
 \qquad
 \mathcal M_{2,N}(f_0)<\infty.
\]
Then, for the mollified Biot--Savart kernel
$
 K_\varepsilon=K*\varphi_\varepsilon
$ and every \(T>0\), we have
\begin{align}
\label{eq:rbm-integrated-general}
 \int_0^T
 |\mathcal R_{\mathrm{RBM}}(t)|\,\mathrm dt
 \le{}&
 \frac\sigma8
 \int_0^T\mathcal I_N(t)\,\mathrm dt
 \notag\\
 &+
 \frac{Cp^2}{\sigma^2}
 M_\varepsilon^{4}\tau^2
 \Bigl[
 \mathcal S_N(f_0)
 +\log\pi
 +\mathcal M_{2,N}(f_0)
 +(A_\varphi+4\sigma)T
 \Bigr],
\end{align}
where \(C>0\) is universal and the estimate is independent of \(N\).
Here \begin{equation}
\label{eq:rbm-kernel-scaling}
 M_\varepsilon
 :=
 \varepsilon^{-1}
 \|K*\varphi\|_{L^\infty(\mathbb R^2)}, \quad  A_\varphi
 :=
 \sup_{0<\varepsilon\le1}
 \sup_{z\in\mathbb R^2}
 |z|\,|K_\varepsilon(z)|
 <\infty.
\end{equation}
In addition, if \(\omega_0\in L^\infty(\mathbb R^2)\) is strictly positive and there exist \(b_0,C_0>0\) such
that
\[
 \int_{\mathbb R^2}
 e^{b_0|x|^2}\omega_0(x)\,\mathrm dx
 <\infty,
 \qquad
 -\log\omega_0(x)
 \le
 C_0(1+|x|^2).
\]
Then
\begin{equation}
\label{eq:rbm-integrated-relative-initial}
 \int_0^T
 |\mathcal R_{\mathrm{RBM}}(t)|\,\mathrm dt
 \le
 \frac\sigma8
 \int_0^T\mathcal I_N(t)\,\mathrm dt
 +
 C_{p,\sigma,\varphi,T,\omega_0}
 \varepsilon^{-4}\tau^2(1+\mathcal H_N(f_0\mid\omega_0^{\otimes N})).
\end{equation}
\end{proposition}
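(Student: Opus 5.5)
Starting from the symmetric representation of Lemma~\ref{lem:rbm-symmetric-representation}, I apply Cauchy--Schwarz in each integral with weight $f_t$:
\[
 |\mathcal R_{\mathrm{RBM}}(t)|
 \le
 \frac{\sigma}{8}\mathcal I_N(t)
 +\frac{C}{\sigma N}\sum_{i=1}^N
 \int_{\mathbb R^{2N}}
 \frac{\bigl|\mathbb E_i\bigl[(f_t^{\xi_k}-f_t^{\xi_{k,i}'})(B_i^{\xi_k}-B_i^{\xi_{k,i}'})\bigr]\bigr|^2}{f_t}\,\mathrm dX^N .
\]
Then Jensen in $\mathbb E_i$ and $|B_i^{\xi_k}-B_i^{\xi_{k,i}'}|\le 2M_\varepsilon$ reduce the second term to $CM_\varepsilon^2\sigma^{-1}N^{-1}\sum_i\mathbb E_i\int (f_t^{\xi_k}-f_t^{\xi_{k,i}'})^2/f_t$. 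Because $f_t\ge\frac12\cdot\frac{1}{\#}$ is not pointwise comparable to either conditional law, I instead use the symmetry of the pair: $f_t=\mathbb E f^{\xi}$ is the common marginal, so I bound by the triangular discrimination $\int (f^{\xi}-f^{\xi'})^2/(f^{\xi}+f^{\xi'})$, after replacing the weight $f_t$ in Cauchy--Schwarz by $\tfrac12(f^{\xi_k}+f^{\xi_{k,i}'})$. This replacement is legitimate only if I keep the score against the mixture. Concretely, I write $\nabla\log(f_t/\omega^{\otimes N})$ against $f^{\xi}$ and $f^{\xi'}$ separately, and control $\mathbb E_i\int f^{\xi}|\nabla_{x_i}\log(f_t/\omega^{\otimes N})|^2$ by $\int f_t|\cdot|^2$ using the marginal law $\xi_{k,i}'\sim\mu_{N,p}$. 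This turns the estimate into $\frac{C M_\varepsilon^2}{\sigma N}\sum_i \mathbb E_i\,\Delta(f_t^{\xi_k},f_t^{\xi_{k,i}'})$, with $\Delta$ the triangular discrimination.

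Next I estimate the triangular discrimination on $[t_k,t]$, with $s=t-t_k\le\tau$. Both densities start from $f_{t_k}$ and solve \eqref{eq:rbm-fixed-partition-fp} with divergence-free drifts differing only in coordinates $a\in\Lambda_i$ (Lemma~\ref{lem:rbm-local-partition-coupling}). Since $\Delta\le 4\,\mathrm{TV}^2$-type bounds are lossy, I use $\Delta(f,g)\le 2\int (f-g)^2/(f+g)$ and compute directly. Writing $g=f^{\xi}-f^{\xi'}$, $h=f^{\xi}+f^{\xi'}$,
\[
\partial_t g+\operatorname{div}(\boldsymbol B^{\xi}g)-\sigma\Delta g=-\operatorname{div}\bigl((\boldsymbol B^{\xi}-\boldsymbol B^{\xi'})f^{\xi'}\bigr),
\]
and the time derivative of $\int g^2/h$ splits into a nonpositive dissipation part, a transport part that vanishes thanks to divergence-freeness, and a source part. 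Instead of a differential inequality, I use Duhamel/heat-flow reasoning. Testing the source and using Cauchy--Schwarz against the dissipation, the bound reduces to the $L^2$-in-time norm of $(\boldsymbol B^{\xi}-\boldsymbol B^{\xi'})\,\nabla\log f^{\xi'}$ restricted to $\Lambda_i$ coordinates. To get the needed $\tau^2$ (not $\tau$), I differentiate once more: $g$ vanishes at $t_k$, and its leading-order behaviour is $-(s)\operatorname{div}((\boldsymbol B^{\xi}-\boldsymbol B^{\xi'})f_{t_k})$, whose $\int|\cdot|^2/f$ size is at most $s^2\cdot 4M_\varepsilon^2\,\mathcal J_{\Lambda_i}$ (divergence-free drift means $\operatorname{div}(\delta B f)=\delta B\cdot\nabla f$). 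The expected outcome is
\[
\Delta(f_t^{\xi_k},f_t^{\xi_{k,i}'})\le \frac{C M_\varepsilon^2\tau}{\sigma}\int_{t_k}^{t}\bigl(\mathcal J_{\Lambda_i}(f_s^{\xi_k})+\mathcal J_{\Lambda_i}(f_s^{\xi_{k,i}'})\bigr)\mathrm ds .
\]
Summing over $i$ and applying the counting identity \eqref{eq:rbm-combinatorial-count} with exchangeability then converts $\sum_i\mathbb E_i\mathcal J_{\Lambda_i}$ into $2p\,\mathbb E_{\xi_k} N\mathcal J_N(f_s^{\xi_k})$, with no loss in $N$. Making this $\tau^2$ bound rigorous is the main obstacle. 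I would first try the Duhamel representation with the reference weight $h$ frozen, together with the standard convexity of $(g,h)\mapsto g^2/h$.

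Integrating in $t$ gives
\[
\int_0^T|\mathcal R_{\mathrm{RBM}}|\le\frac{\sigma}{8}\int_0^T\mathcal I_N+\frac{Cp^2M_\varepsilon^4\tau^2}{\sigma^2}\sum_k\int_{t_k}^{t_{k+1}}\mathbb E_{\xi_k}\mathcal J_N(f_s^{\xi_k})\,\mathrm ds .
\]
For the last sum I use the Boltzmann entropy dissipation for \eqref{eq:rbm-fixed-partition-fp}. Because the drift is divergence-free, $\frac{\mathrm d}{\mathrm ds}\mathcal S_N(f^{\xi_k})=-\sigma\mathcal J_N(f^{\xi_k})$. Jensen gives $\mathcal S_N(f_{t_{k+1}})\le\mathbb E\mathcal S_N(f^{\xi_k}_{t_{k+1}})$, so the sum telescopes to $\sigma^{-1}\bigl(\mathcal S_N(f_0)-\mathcal S_N(f_T)\bigr)$. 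The lower bound on $\mathcal S_N(f_T)$ comes from the Gaussian comparison $\mathcal S_N(F)\ge-\log\pi-\mathcal M_{2,N}(F)$ (up to constants). The moment growth $\frac{\mathrm d}{\mathrm dt}\mathcal M_{2,N}\le 2A_\varphi+4\sigma$ follows from $x_a\cdot K_\varepsilon(x_a-x_b)$ and oddness, since the symmetrized pair gives $(x_a-x_b)\cdot K_\varepsilon=0$; it may even vanish. This yields \eqref{eq:rbm-integrated-general} with $M_\varepsilon^4=\varepsilon^{-4}\|K*\varphi\|_\infty^4$. Finally, for \eqref{eq:rbm-integrated-relative-initial} I write $\mathcal S_N(f_0)=\mathcal H_N(f_0|\omega_0^{\otimes N})+\frac1N\int f_0\log\omega_0^{\otimes N}$, bounded using $\omega_0\in L^\infty$. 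The moment $\mathcal M_{2,N}(f_0)$ is controlled by the Donsker--Varadhan inequality, $\mathcal M_{2,N}(f_0)\le b_0^{-1}\bigl(\mathcal H_N+\log\int e^{b_0|x|^2}\omega_0\bigr)$; the hypothesis $-\log\omega_0\le C_0(1+|x|^2)$ is needed only if the sign requires it.
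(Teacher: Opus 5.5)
Your reduction of $\mathcal R_{\mathrm{RBM}}$ to the averaged triangular discrimination matches the paper: Young's inequality with weight $f_t^{\xi_k}+f_t^{\xi_{k,i}'}$, together with $\mathbb E_i(f_t^{\xi_k}+f_t^{\xi_{k,i}'})=2f_t$. The counting step, the entropy telescoping by convexity, the moment and entropy lower bounds, and the handling of the relative initial data also match.

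The gap is the step you flag yourself as the main obstacle. The stability estimate
\[
 \mathscr D_i(t;\xi_k,\xi_{k,i}')\le C\,p\,M_\varepsilon^2\,(t-t_k)\int_{t_k}^t\bigl[\mathcal J_{\Lambda_i}(f_s^{\xi_k})+\mathcal J_{\Lambda_i}(f_s^{\xi_{k,i}'})\bigr]\,\mathrm ds
\]
is the only source of the second factor of $\tau$, and it is never proved. None of the routes you sketch delivers it:
\begin{itemize}
\item Absorbing the source into the dissipation only gives $\frac{\mathrm d}{\mathrm dt}\mathscr D_i\lesssim pM_\varepsilon^2/\sigma$. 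As you note yourself, this yields an overall $O(\tau)$ error.
\item The first-order expansion $g\approx-(t-t_k)\,\delta\boldsymbol B\cdot\nabla f_{t_k}$ is only a heuristic. Its remainder involves higher derivatives of the fixed-partition laws, which the Fisher information does not control, and the Fisher information is the only quantity entropy dissipation supplies uniformly in $N$. Moreover, the weight $f_t^{\xi_k}+f_t^{\xi_{k,i}'}$ is not pointwise comparable to $f_{t_k}$.
\item The frozen-weight Duhamel idea is never carried out.
\end{itemize}

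The missing idea is that no second-order expansion is needed. Discard the dissipation entirely and apply Cauchy--Schwarz to the source against $\mathscr D_i^{1/2}$ itself. Write $f=f_t^{\xi_k}$, $f'=f_t^{\xi_{k,i}'}$, $g=f-f'$, $h=f+f'$ and $\delta\boldsymbol B=\boldsymbol B^{\xi_k}-\boldsymbol B^{\xi_{k,i}'}$. Since $\diver\,\delta\boldsymbol B=0$, the source term $4\int\frac{ff'}{h}\,\delta\boldsymbol B\cdot\nabla\frac gh$ integrates by parts to $-4\int\frac gh\,\delta\boldsymbol B\cdot\nabla\frac{ff'}{h}$. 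Three ingredients then close the estimate: Cauchy--Schwarz with weight $h$; the pointwise bound $h^{-1}|\nabla_{\Lambda_i}(ff'/h)|^2\le2f|\nabla_{\Lambda_i}\log f|^2+2f'|\nabla_{\Lambda_i}\log f'|^2$; and $|\delta\boldsymbol B|^2\le8pM_\varepsilon^2$. Together they give
\[
 \frac{\mathrm d}{\mathrm dt}\mathscr D_i\le16\sqrt p\,M_\varepsilon\,\mathscr D_i^{1/2}\bigl[\mathcal J_{\Lambda_i}(f_t^{\xi_k})+\mathcal J_{\Lambda_i}(f_t^{\xi_{k,i}'})\bigr]^{1/2}.
\]
Because $\mathscr D_i(t_k)=0$, you integrate the resulting inequality for $\mathscr D_i^{1/2}$ and then apply Cauchy--Schwarz in time. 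This gives the displayed bound with $C=64$; the factor $(t-t_k)$ comes from Cauchy--Schwarz in time, not from a Taylor expansion.

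Two smaller points. Your anticipated bound carries a spurious $\sigma^{-1}$, which would produce $\sigma^{-3}$ instead of the stated $\sigma^{-2}$ in \eqref{eq:rbm-integrated-general}. Your factor $4M_\varepsilon^2$ also omits the $|\Lambda_i|=2p$ coordinates on which $\delta\boldsymbol B$ is supported.
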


\begin{proof}
The case \(p=N\) is trivial by
Lemma~\ref{lem:rbm-symmetric-representation}, so assume \(p<N\). We first
prove the estimate for smooth, strictly positive densities with sufficient
decay. The approximation argument is given at the end.

\medskip
\noindent
\textbf{Step 1: reduction to triangular discrimination.}
Fix \(t\in[t_k,t_{k+1})\), using Young's inequality for \eqref{eq:rbm-symmetric-representation} one has
\begin{align}
\label{eq:rbm-young}
 |\mathcal R_{\mathrm{RBM}}(t)|
 \le{}&
 \frac\sigma{16N}
 \sum_{i=1}^N
 \mathbb E_i
 \int_{\mathbb R^{2N}}
 (f_t^{\xi_k}+f_t^{\xi_{k,i}'})| \nabla_{x_i}  \log  \frac{f_t}  {\omega_\varepsilon^{\otimes N}}|^2\,\mathrm dX^N  \notag\\
 &+
 \frac1{\sigma N}
 \sum_{i=1}^N
 \mathbb E_i
 \int_{\mathbb R^{2N}}
 \frac{|f_t^{\xi_k}-f_t^{\xi_{k,i}'}|^2}{f_t^{\xi_k}+f_t^{\xi_{k,i}'}}|B_i^{\xi_k}-B_i^{\xi_{k,i}'}|^2\,\mathrm dX^N.
\end{align}
By Lemma~\ref{lem:rbm-local-partition-coupling}, $\xi_k$ and $\xi_{k,i}'$ have the same uniform marginal law. Hence, for every fixed \(i\),
\[
  \mathbb E_i f_t^{\xi_k}
  =
  \mathbb E_i f_t^{\xi_{k,i}'}
  =
  \mathbb E_{\xi_k}f_t^{\xi_k}
  =
  f_t.
\]
Hence, pointwise in \(X^N\),
\[
  \mathbb E_i
  \left(
    f_t^{\xi_k}
    +
    f_t^{\xi_{k,i}'}
  \right)
  =
  2f_t.
\]
Therefore, the first term on the right-hand side of \eqref{eq:rbm-young} is thus
$
 \frac\sigma8\mathcal I_N(t).
$\\
Moreover, since $
 |B_i^{\xi_k}-B_i^{\xi_{k,i}'}|\le2M_\varepsilon$, then
\begin{equation}
\label{eq:rbm-by-D}
 |\mathcal R_{\mathrm{RBM}}(t)|
 \le
 \frac\sigma8\mathcal I_N(t)
 +
 \frac{4M_\varepsilon^2}{\sigma N}
 \sum_{i=1}^N
 \mathbb E_i
 \int_{\mathbb R^{2N}}
 \frac{
 |f_t^{\xi_k}-f_t^{\xi_{k,i}'}|^2
 }{
 f_t^{\xi_k}+f_t^{\xi_{k,i}'}
 }
 \,\mathrm dX^N.
\end{equation}
\medskip
\noindent
\textbf{Step 2: stability of two locally coupled fixed-partition laws.}

Fix \(i,\xi_k,\xi_{k,i}'\), define
\begin{equation}
\label{eq:rbm-triangular-discrimination}
 \mathscr D_i(t;\xi_k,\xi_{k,i}')
 :=
 \int_{\mathbb R^{2N}}
 \frac{
 |f_t^{\xi_k}-f_t^{\xi_{k,i}'}|^2
 }{
 f_t^{\xi_k}+f_t^{\xi_{k,i}'}
 }
 \,\mathrm dX^N.
\end{equation} 
Using the Liouville equations \eqref{eq:rbm-fixed-partition-fp}, a direct
calculation gives
\begin{align}
\label{eq:rbm-D-exact}
 \frac{\mathrm d}{\mathrm dt}
 \mathscr D_i(t;\xi_k,\xi_{k,i}')
 ={}&2\int_{\mathbb{R}^{2 N}}  \frac{
 f_t^{\xi_k}-f_t^{\xi_{k,i}'}
 }{
 f_t^{\xi_k}+f_t^{\xi_{k,i}'}
 }\left(\partial_t f_t^{\xi_k}-\partial_t f_t^{\xi_{k,i}'}\right) \mathrm{d} X^N
 -\int_{\mathbb{R}^{2 N}}\left|\frac{
 f_t^{\xi_k}-f_t^{\xi_{k,i}'}
 }{
 f_t^{\xi_k}+f_t^{\xi_{k,i}'}
 }\right|^2\left(\partial_t f_t^{\xi_k}+\partial_t f_t^{\xi_{k,i}'}\right) \mathrm{d} X^N
  \notag\\
 =&
 -2\sigma
 \int_{\mathbb R^{2N}}
 \left( f_t^{\xi_k}+f_t^{\xi_{k,i}'}\right)\left|\nabla_{X^N} \frac{
 f_t^{\xi_k}-f_t^{\xi_{k,i}'}
 }{
 f_t^{\xi_k}+f_t^{\xi_{k,i}'}
 }\right|^2\,\mathrm dX^N
 \notag\\
 &+
 4
 \int_{\mathbb R^{2N}}
 \frac{
 f_t^{\xi_k}f_t^{\xi_{k,i}'}
 }{
 f_t^{\xi_k}+f_t^{\xi_{k,i}'}
 }
 \bigl(
 \boldsymbol B^{\xi_k}
 -
 \boldsymbol B^{\xi_{k,i}'}
 \bigr)
 \cdot
 \nabla_{X^N}\frac{
 f_t^{\xi_k}-f_t^{\xi_{k,i}'}
 }{
 f_t^{\xi_k}+f_t^{\xi_{k,i}'}
 }
 \,\mathrm dX^N.
\end{align}
Recall that both drifts are divergence free by \eqref{eq:rbm-full-drift-div-free}, hence
\begin{align*}
 &\int
  \frac{
 f_t^{\xi_k}f_t^{\xi_{k,i}'}
 }{
 f_t^{\xi_k}+f_t^{\xi_{k,i}'}
 } \bigl(
 \boldsymbol B^{\xi_k}
 -
 \boldsymbol B^{\xi_{k,i}'}
 \bigr)\cdot\nabla_{X^N}\frac{
 f_t^{\xi_k}-f_t^{\xi_{k,i}'}
 }{
 f_t^{\xi_k}+f_t^{\xi_{k,i}'}
 }\,\mathrm dX^N\\
 =& -
 \int
 \frac{
 f_t^{\xi_k}-f_t^{\xi_{k,i}'}
 }{
 f_t^{\xi_k}+f_t^{\xi_{k,i}'}
 } \left(\boldsymbol B^{\xi_k}-\boldsymbol B^{\xi_{k,i}'}\right)\cdot\nabla_{X^N} \frac{
 f_t^{\xi_k}f_t^{\xi_{k,i}'}
 }{
 f_t^{\xi_k}+f_t^{\xi_{k,i}'}
 } \,\mathrm dX^N.
\end{align*}
Dropping the non-positive diffusion term in \eqref{eq:rbm-D-exact} and using the 
Cauchy--Schwarz inequality  with weight $f_t^{\xi_k}+f_t^{\xi_{k,i}'}$ gives
\begin{align}
\label{eq:rbm-D-Cauchy}
 \frac{\mathrm d}{\mathrm dt}\mathscr D_i
 \le
 4\mathscr D_i^{1/2}
 \left[
 \int_{\mathbb R^{2N}}
 \frac{1}{f_t^{\xi_k}+f_t^{\xi_{k,i}'}}
 \left| \left(\boldsymbol B^{\xi_k}-\boldsymbol B^{\xi_{k,i}'}\right)\cdot\nabla_{X^N} \frac{
 f_t^{\xi_k}f_t^{\xi_{k,i}'}
 }{
 f_t^{\xi_k}+f_t^{\xi_{k,i}'}
 }\right|^2
 \,\mathrm dX^N
 \right]^{1/2}.
\end{align}
Note that the vector \(\boldsymbol B^{\xi_k}-\boldsymbol B^{\xi_{k,i}'}\) is supported on \(\Lambda_i\) by \eqref{eq:rbm-drift-localization}-\eqref{eq:rbm-full-drift-difference}. 
Moreover, 
\[
 \nabla_{\Lambda_i}\frac{
 f_t^{\xi_k}f_t^{\xi_{k,i}'}
 }{
 f_t^{\xi_k}+f_t^{\xi_{k,i}'}
 }
 =
 \frac{\left(f_t^{\xi_{k,i}'}\right)^2\nabla_{\Lambda_i} f_t^{\xi_k}
 +
  \left(f_t^{\xi_k}\right)^2\nabla_{\Lambda_i}f_t^{\xi_{k,i}'}
 }{
\left( f_t^{\xi_k}+f_t^{\xi_{k,i}'}\right)^2
 }.
\]
Using $
 |a+b|^2\le2|a|^2+2|b|^2
$ and $ \frac{a^2b^4}{(a+b)^5}
 \le
 a$, one can obtain the pointwise bound
\begin{equation}
\label{eq:rbm-overlap-gradient}
 \frac{
 \left|\nabla_{\Lambda_i}\frac{
 f_t^{\xi_k}f_t^{\xi_{k,i}'}
 }{
 f_t^{\xi_k}+f_t^{\xi_{k,i}'}
 }\right|^2
 }{
f_t^{\xi_k}+f_t^{\xi_{k,i}'}
 }
 \le
 2f_t^{\xi_k}
 \left|\nabla_{\Lambda_i}\log f_t^{\xi_k}\right|^2
 +
 2f_t^{\xi_{k,i}'}
 \left|\nabla_{\Lambda_i}\log f_t^{\xi_{k,i}'}\right|^2.
\end{equation}
Combining
\eqref{eq:rbm-full-drift-difference} and
\eqref{eq:rbm-overlap-gradient} yields
\begin{align}
\label{eq:rbm-source-bound}
 & \int_{\mathbb R^{2N}}
 \frac{1}{f_t^{\xi_k}+f_t^{\xi_{k,i}'}}
 \left| \left(\boldsymbol B^{\xi_k}-\boldsymbol B^{\xi_{k,i}'}\right)\cdot\nabla_{X^N} \frac{
 f_t^{\xi_k}f_t^{\xi_{k,i}'}
 }{
 f_t^{\xi_k}+f_t^{\xi_{k,i}'}
 }\right|^2
 \,\mathrm dX^N\le
 16pM_\varepsilon^2
 \left[
 \mathcal J_{\Lambda_i}(f_t^{\xi_k})
 +
 \mathcal J_{\Lambda_i}(f_t^{\xi_{k,i}'})
 \right].
\end{align}
Thus
\begin{align}
\label{eq:rbm-D-square-root}
 \frac{\mathrm d}{\mathrm dt}\mathscr D_i
 \le
 16\sqrt p\,M_\varepsilon
 \mathscr D_i^{1/2}
 \left[
 \mathcal J_{\Lambda_i}(f_t^{\xi_k})
 +
 \mathcal J_{\Lambda_i}(f_t^{\xi_{k,i}'})
 \right]^{1/2}.
\end{align}
Since both fixed-partition laws start from the common density \(f_{t_k}\), we have
$
  \mathscr D_i(t_k;\xi_k,\xi_{k,i}')=0.
$
Applying the preceding differential inequality to
\((\mathscr D_i+\delta)^{1/2}\), integrating over \([t_k,t]\), and
then letting \(\delta\downarrow0\), we obtain
\[
  \mathscr D_i(t;\xi_k,\xi_{k,i}')^{1/2}
  \le
  8\sqrt p\,M_\varepsilon
  \int_{t_k}^t
  \left[
    \mathcal J_{\Lambda_i}(f_s^{\xi_k})
    +
    \mathcal J_{\Lambda_i}(f_s^{\xi_{k,i}'})
  \right]^{1/2}
  \,\mathrm ds.
\]
Squaring this estimate and applying the Cauchy--Schwarz inequality in
time yield
\begin{align}
\label{eq:rbm-D-local-Fisher}
 \mathscr D_i(t;\xi_k,\xi_{k,i}')
 \le
 64pM_\varepsilon^2(t-t_k)
 \int_{t_k}^t
 \left[
 \mathcal J_{\Lambda_i}(f_s^{\xi_k})
 +
 \mathcal J_{\Lambda_i}(f_s^{\xi_{k,i}'})
 \right]
 \,\mathrm ds.
\end{align}

\medskip
\noindent
\textbf{Step 3: averaging the local Fisher information.}\\
Set
\begin{equation}
\label{eq:rbm-Jbar}
 \overline{\mathcal J}_{N,k}(s)
 :=
 \mathbb E_{\xi_k}
 \mathcal J_N(f_s^{\xi_k})= \mathbb E_{\xi_{k,i}'}
 \mathcal J_N(f_s^{\xi_{k,i}'}).
\end{equation}
Applying \eqref{eq:rbm-combinatorial-count} for a fixed partition with
\[
 A_a
 :=
 \int_{\mathbb R^{2N}}
 f_s^{\xi_k}
 |\nabla_{x_a}\log f_s^{\xi_k}|^2
 \,\mathrm dX^N,
\]
and then averaging over \(\xi_k\), we have
\begin{equation}
\label{eq:rbm-first-local-Fisher-average}
 \frac1N
 \sum_{i=1}^N
 \mathbb E_i
 \mathcal J_{\Lambda_i}(f_s^{\xi_k})
 \le
 2p\,\overline{\mathcal J}_{N,k}(s).
\end{equation}
Similarly,
\begin{equation}
\label{eq:rbm-second-local-Fisher-average}
 \frac1N
 \sum_{i=1}^N
 \mathbb E_i
 \mathcal J_{\Lambda_i}(f_s^{\xi_{k,i}'})
 \le
 2p\,\overline{\mathcal J}_{N,k}(s).
\end{equation}
By averaging \eqref{eq:rbm-D-local-Fisher} and using
\eqref{eq:rbm-first-local-Fisher-average}--
\eqref{eq:rbm-second-local-Fisher-average}, we have
\begin{equation}
\label{eq:rbm-D-average}
 \frac1N
 \sum_{i=1}^N
 \mathbb E_i
 \mathscr D_i(t;\xi_k,\xi_{k,i}')
 \le
 256p^2M_\varepsilon^2(t-t_k)
 \int_{t_k}^t
 \overline{\mathcal J}_{N,k}(s)\,\mathrm ds.
\end{equation}
Combining \eqref{eq:rbm-by-D} and \eqref{eq:rbm-D-average} gives
\begin{equation}
\label{eq:rbm-pointwise-bound}
 |\mathcal R_{\mathrm{RBM}}(t)|
 \le
 \frac\sigma8\mathcal I_N(t)
 +
 \frac{1024p^2}{\sigma}
 M_\varepsilon^4(t-t_k)
 \int_{t_k}^t
 \overline{\mathcal J}_{N,k}(s)\,\mathrm ds.
\end{equation}

\medskip
\noindent
\textbf{Step 4: entropy dissipation and summation over the time steps.}\\
Let
\[
 T_k:=t_{k+1}\wedge T.
\]
By Fubini's theorem, one has
\begin{align}
\label{eq:rbm-one-step-bound}
 \int_{t_k}^{T_k}
 |\mathcal R_{\mathrm{RBM}}(t)|\,\mathrm dt
 \le{}&
 \frac\sigma8
 \int_{t_k}^{T_k}
 \mathcal I_N(t)\,\mathrm dt+
 \frac{512p^2}{\sigma}
 M_\varepsilon^4\tau^2
 \int_{t_k}^{T_k}
 \overline{\mathcal J}_{N,k}(s)\,\mathrm ds.
\end{align}
For each fixed partition, multiplying
\eqref{eq:rbm-fixed-partition-fp} by
$
 \frac1N(1+\log f_t^{\xi_k})$,
and then integrating by parts and using the divergence-free condition for $\boldsymbol{B^{\xi_k}}$ 
\eqref{eq:rbm-full-drift-div-free}, we have
\begin{equation}
\label{eq:rbm-Boltzmann-dissipation}
 \frac{\mathrm d}{\mathrm dt}
 \mathcal S_N(f_t^{\xi_k})
 =
 -\sigma\mathcal J_N(f_t^{\xi_k}).
\end{equation}
Since
\[
 f_{t_k}^{\xi_k}=f_{t_k},
 \qquad
 f_{T_k}
 =
 \mathbb E_{\xi_k}f_{T_k}^{\xi_k},
\]
we have
\begin{align*}
 \sigma
 \int_{t_k}^{T_k}
 \overline{\mathcal J}_{N,k}(s)\,\mathrm ds
=
 \mathcal S_N(f_{t_k})
 -
 \mathbb E_{\xi_k}
 \mathcal S_N(f_{T_k}^{\xi_k})\le
 \mathcal S_N(f_{t_k})
 -
 \mathcal S_N(f_{T_k}),
\end{align*}
where the last inequality follows from the convexity of
$
 F\longmapsto\int F\log F.
$
Now by combining over all steps intersecting \([0,T]\), we obtain
\begin{equation}
\label{eq:rbm-global-Fisher-telescope}
 \sigma
 \sum_{k:t_k<T}
 \int_{t_k}^{t_{k+1}\wedge T}
 \overline{\mathcal J}_{N,k}(s)\,\mathrm ds
 \le
 \mathcal S_N(f_{0})-\mathcal S_N(f_{T}).
\end{equation}

\medskip
\noindent
\textbf{Step 5: lower bound for the terminal entropy.}
\\For each fixed partition, the Liouville equation \eqref{eq:rbm-fixed-partition-fp} gives the following evolution equation for the second moment
\begin{equation}
\label{eq:rbm-moment-evolution}
 \frac{\mathrm d}{\mathrm dt}
 \mathcal M_{2,N}(f_t^{\xi_k})
 =
 \frac2N
 \int_{\mathbb R^{2N}}
 \sum_{a=1}^N
 x_a\cdot B_a^{\xi_k}
 f_t^{\xi_k}\,\mathrm dX^N
 +
 4\sigma.
\end{equation}
By using the oddness of \(K_\varepsilon\) and pairing the two orientations of
each unordered pair in a batch, one has
\begin{equation}
\label{eq:rbm-pairing-identity}
 \sum_{a=1}^N
 x_a\cdot B_a^{\xi_k}
 =
 \frac1{p-1}
 \sum_{C\in\xi_k}
 \sum_{\substack{a,b\in C\\a<b}}
 (x_a-x_b)\cdot K_\varepsilon(x_a-x_b).
\end{equation}
Here $C$ ranges over the batches in the partition $\xi_k$. Since there are \(N(p-1)/2\) unordered within-batch pairs and \(|z|\,|K_\eps(z)|\le A_\varphi\), one has
\[
 \left|
 \frac2N
 \sum_{a=1}^N
 x_a\cdot B_a^{\xi_k}
 \right|
 \le
 A_\varphi.
\] 
Then averaging and iterating over the time steps give
\begin{equation}
\label{eq:rbm-moment-bound}
 \mathcal M_{2,N}(f_t)
 \le
 \mathcal M_{2,N}(f_0)
 +(A_\varphi+4\sigma)t,
 \qquad
 0\le t\le T.
\end{equation}
Note that for any probability density $F$, one has
\begin{equation}
\label{eq:rbm-entropy-lower-bound}
 \mathcal S_N(F)
 \ge
 -\log\pi-\mathcal M_{2,N}(F).
\end{equation}
Combining \eqref{eq:rbm-global-Fisher-telescope},
\eqref{eq:rbm-moment-bound}, and
\eqref{eq:rbm-entropy-lower-bound} yields
\begin{align}
\label{eq:rbm-global-absolute-Fisher}
 &\sum_{k:t_k<T}
 \int_{t_k}^{t_{k+1}\wedge T}
 \overline{\mathcal J}_{N,k}(s)\,\mathrm ds
 \le
 \frac1\sigma
 \Bigl[
 \mathcal S_N(f_0)
 +\log\pi
 +\mathcal M_{2,N}(f_0)
 +(A_\varphi+4\sigma)T
 \Bigr].
\end{align}
Summing \eqref{eq:rbm-one-step-bound} over \(k\) and using
\eqref{eq:rbm-global-absolute-Fisher} proves
\eqref{eq:rbm-integrated-general}.

It remains to verify the two blob-kernel bounds. By the homogeneity
$
 K(\varepsilon z)=\varepsilon^{-1}K(z)
$, one has
$
 K_\varepsilon(\varepsilon z)
 =
 \varepsilon^{-1}(K*\varphi)(z),
$
which proves \eqref{eq:rbm-kernel-scaling}. With
$
 w=z/\varepsilon,
$
one has
\[
 |z|\,|K_\varepsilon(z)|
 =
 |w|\,|(K*\varphi)(w)|.
\]
The function \(K*\varphi\) is bounded on bounded sets. If
$
 \operatorname{supp}\varphi\subset B_R
$
and \(|w|\ge2R\), then
\[
 |(K*\varphi)(w)|
 \le
 \int_{B_R}
 \frac{C\varphi(y)}{|w-y|}\,\mathrm dy
 \le
 \frac C{|w|}.
\]
Hence \[
 A_\varphi
 :=
 \sup_{0<\varepsilon\le1}
 \sup_{z\in\mathbb R^2}
 |z|\,|K_\varepsilon(z)|
 <\infty.
\]
Finally, choose \(0<\lambda<b_0\). The entropy variational inequality gives
\begin{equation}
\label{eq:rbm-initial-moment-from-entropy}
 \lambda\mathcal M_{2,N}(f_0)
 \le
\mathcal H_N\left(f_0 \mid \omega_0^{\otimes N}\right)
 +
 \log
 \int_{\mathbb R^2}
 e^{\lambda|x|^2}\omega_0(x)\,\mathrm dx.
\end{equation}
By the lower bound on \(\log\omega_0\) together with
\eqref{eq:rbm-initial-moment-from-entropy}, one has
\[
 \mathcal S_N(f_0)
 =
 \mathcal H_N\left(f_0 \mid \omega_0^{\otimes N}\right)
 +
 \frac1N
 \int_{\mathbb R^{2N}}
 f_0\log\omega_0^{\otimes N}\,\mathrm dX^N.
\]
Since \(\omega_0\in L^\infty\),
\[
 \mathcal S_N(f_0)
 \le
 \mathcal H_N\left(f_0 \mid \omega_0^{\otimes N}\right)+\log\|\omega_0\|_{L^\infty}.
\]
Thus the square bracket in \eqref{eq:rbm-integrated-general} is bounded by
$
 C_{T,\omega_0}\left(1+\mathcal H_N\left(f_0 \mid \omega_0^{\otimes N}\right)\right),
$
which proves \eqref{eq:rbm-integrated-relative-initial}.
\end{proof}
\section{The mean field contribution}
\label{sec:mean-field-contribution}

In this section, we estimate the mean field contribution
\(\mathcal R_{\mathrm{mf}}\) \eqref{eq:R-mf-score} in the relative entropy identity. The main
difficulty is to obtain a bound that is uniform in the blob radius
\(\varepsilon\). A direct
bounded-kernel implementation of the framework of \cite[Lemma~4.5]{HuangJinLi2025} is not uniform with respect to
the blob radius. Indeed, the crude estimate
$
  \|K_\varepsilon\|_{L^\infty}
  \lesssim \varepsilon^{-1}
$ and Grönwall's inequality
would lead to 
\[
  \mathcal H_N(t)
  \lesssim
  \exp\!\left(\frac{C_T}{\varepsilon^2}\right)
  \left[
    \mathcal H_N(0)
    +
    \text{consistency errors}
  \right].
\]
Such an exponentially singular dependence is unsuitable for the
vanishing-blob limit and is also computationally restrictive. The proof argument in this section consists of two steps which are similar to \cite{FengWang2026}. We first establish Gaussian bounds and
logarithmic derivative estimates for the regularized reference solution
\(\omega_\varepsilon\) with constants uniform for
\(0<\varepsilon\leq1\). These estimates provide the spatial decay and the
control of the score and logarithmic Hessian needed in the fluctuation
analysis. We then exploit the oddness and divergence-free structure of the
regularized Biot--Savart kernel to symmetrize the particle fluctuation. The
resulting kernel contains the score difference
\[
  \nabla\log\omega_\varepsilon(x)
  -
  \nabla\log\omega_\varepsilon(y),
\]
which cancels the near-diagonal singularity of \(K_\varepsilon(x-y)\).
Combining this cancellation with the canonical exponential estimate of
Jabin-Wang \cite{JabinWang2018} yields the desired bound for $\cR_{mf}$.
\subsection{Uniform Gaussian and logarithmic estimates}
We first establish the following Gaussian and Hamilton-type logarithmic gradient estimate for $\omega_{\varepsilon}$.
\begin{proposition}[Uniform Gaussian and logarithmic estimates]
\label{prop:uniform-reference-estimates}
For $\sigma>0$ and $0<\varepsilon\leq1$, let $\omega_\varepsilon$ be the positive
classical solution of
\begin{equation}
\label{eq:regularized-reference}
\left\{
\begin{aligned}
  \partial_t\omega_\varepsilon
  +
  u_\varepsilon\cdot\nabla\omega_\varepsilon
  &=
  \sigma\Delta\omega_\varepsilon,
  \\
  u_\varepsilon
  &=
  K_\varepsilon*\omega_\varepsilon
  =
  K*(\varphi_\varepsilon*\omega_\varepsilon),
  \\
  \omega_\varepsilon|_{t=0}
  &=
  \omega_0.
\end{aligned}
\right.
\end{equation}
Assume that $\omega_0$ is a strictly positive probability density and that
there exist constants $a_0,C_0,C_1,C_2>0$ such that
\begin{equation}
\label{eq:initial-reference-assumptions}
\begin{aligned}
  \omega_0(x)
  &\leq
  C_0e^{-a_0|x|^2},
  \\
  |\nabla\log\omega_0(x)|^2
  &\leq
  C_1(1+|x|^2),
  \\
  |\nabla^2\log\omega_0(x)|
  &\leq
  C_2(1+|x|^2).
\end{aligned}
\end{equation}
Then, for every $T>0$, there exist constants
$
  a_T,b_T,c_T,C_T>0,
$
depending only on $T$, $\sigma$, and the constants in
\eqref{eq:initial-reference-assumptions}, but independent of
$0<\varepsilon\leq1$, such that
\begin{equation}
\label{eq:two-sided-gaussian-reference}
  c_Te^{-b_T|x|^2}
  \leq
  \omega_\varepsilon(t,x)
  \leq
  C_Te^{-a_T|x|^2},
  \qquad
  0\leq t\leq T.
\end{equation}
Moreover,
\begin{equation}
\label{eq:uniform-score-reference}
  |\nabla\log\omega_\varepsilon(t,x)|
  \leq
  C_T(1+|x|)
\end{equation}
and
\begin{equation}
\label{eq:uniform-log-hessian-reference}
  |\nabla^2\log\omega_\varepsilon(t,x)|
  \leq
  C_T(1+|x|^2).
\end{equation}
\end{proposition}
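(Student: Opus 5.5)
The plan is to get every constant from one uniform input: the velocity $u_\varepsilon=K*(\varphi_\varepsilon*\omega_\varepsilon)$ and its first two derivatives are bounded on $[0,T]\times\mathbb R^2$, uniformly in $0<\varepsilon\le1$. After that the drift is harmless, and the Gaussian and logarithmic estimates follow from parabolic comparison arguments for linear advection–diffusion and for the equation satisfied by $L_\varepsilon:=\log\omega_\varepsilon$.

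\textbf{Step 1: uniform bounds on the velocity.}
\begin{itemize}
\item The assumptions \eqref{eq:initial-reference-assumptions} give $\omega_0\in L^1\cap L^\infty$. They also give $\nabla\omega_0=\omega_0\nabla\log\omega_0$ bounded and Gaussian-decaying, and similarly for $\nabla^2\omega_0=\omega_0(\nabla^2\log\omega_0+\nabla\log\omega_0\otimes\nabla\log\omega_0)$.
\item Since $\operatorname{div}u_\varepsilon=0$, the maximum principle and mass conservation give $\|\omega_\varepsilon(t)\|_{L^1\cap L^\infty}\le\|\omega_0\|_{L^1\cap L^\infty}$. The same holds for $\varphi_\varepsilon*\omega_\varepsilon$, because convolution with a probability density is an $L^p$ contraction.
\item The interpolation bound $|K*g|\lesssim\|g\|_{L^1}^{1/2}\|g\|_{L^\infty}^{1/2}$ then gives $\|u_\varepsilon\|_{L^\infty}\le U$, uniformly in $\varepsilon$.
\item Duhamel's formula with the heat kernel, $\nabla\omega_\varepsilon=\nabla e^{\sigma t\Delta}\omega_0-\int_0^t\nabla e^{\sigma(t-s)\Delta}\operatorname{div}(u_\varepsilon\omega_\varepsilon)\,\mathrm ds$, handled with the integrable singularity $(t-s)^{-1/2}$, yields uniform $W^{1,\infty}$ and $C^{1,\alpha}$ bounds on $\omega_\varepsilon$.
\item Calderón–Zygmund/Schauder estimates for $\nabla K$ then bound $\nabla u_\varepsilon$ uniformly. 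One more bootstrap, using the second-derivative information on $\omega_0$, bounds $\nabla^2u_\varepsilon$ uniformly.
\item The mollifier never hurts here, since $\varphi_\varepsilon*$ commutes with derivatives and is contractive in every norm used.
\end{itemize}

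\textbf{Step 2: two-sided Gaussian bounds \eqref{eq:two-sided-gaussian-reference}.}
\begin{itemize}
\item $\omega_\varepsilon$ is the law of $X_t=X_0+\int_0^tu_\varepsilon(s,X_s)\,\mathrm ds+\sqrt{2\sigma}B_t$ with $|u_\varepsilon|\le U$. Aronson-type two-sided Gaussian bounds for the transition density of a uniformly elliptic operator with bounded drift therefore apply, with constants depending only on $U,\sigma,T$.
\item Alternatively, one can compare directly with explicit super- and subsolutions of the form $C\exp(-a(t)|x|^2+\beta t)$.
\item Upper bound: convolve the Gaussian upper kernel with $\omega_0\le C_0e^{-a_0|x|^2}$.
\item Lower bound: integrating $|\nabla\log\omega_0|\le C(1+|x|)$ along rays gives $\omega_0\ge ce^{-b|x|^2}$. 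Convolving with the lower Gaussian kernel gives $\omega_\varepsilon\ge c_Te^{-b_T|x|^2}$.
\end{itemize}

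\textbf{Step 3: the score bound \eqref{eq:uniform-score-reference}.} $L_\varepsilon$ solves
\[
\partial_tL_\varepsilon+u_\varepsilon\cdot\nabla L_\varepsilon=\sigma\Delta L_\varepsilon+\sigma|\nabla L_\varepsilon|^2 .
\]
\begin{itemize}
\item Step 2 gives $-b_T|x|^2-C\le L_\varepsilon\le C$. Set $\psi:=C'+b_T|x|^2-L_\varepsilon\ge1$, which is comparable to $1+|x|^2$.
\item Run a Bernstein/Hamilton maximum-principle argument on $Q=|\nabla L_\varepsilon|^2/\psi$. Differentiating the equation produces the term $-2\sigma|\nabla^2L_\varepsilon|^2$, which dominates.
\item The drift contributes only $\nabla u_\varepsilon:\nabla L_\varepsilon\otimes\nabla L_\varepsilon$, controlled by $\|\nabla u_\varepsilon\|_\infty Q\psi$ and hence by a Gronwall term.
\item Initially $Q(0)$ is bounded by the second assumption in \eqref{eq:initial-reference-assumptions}.
\item This gives $|\nabla L_\varepsilon|^2\le C_T\psi\le C_T(1+|x|^2)$.
\item Because the domain is unbounded, apply the maximum principle to $Q-\delta e^{\lambda t}(1+|x|^2)^m$, or localize with cutoffs, and let $\delta\to0$. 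Here one uses that $\omega_\varepsilon$ for fixed $\varepsilon$ is smooth with at most polynomially growing log-derivatives.
\end{itemize}

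\textbf{Step 4: the log-Hessian bound \eqref{eq:uniform-log-hessian-reference}, the main obstacle.}
\begin{itemize}
\item The difficulty is that $H=\nabla^2L_\varepsilon$ satisfies
\[
(\partial_t+u_\varepsilon\cdot\nabla-\sigma\Delta-2\sigma\nabla L_\varepsilon\cdot\nabla)H=2\sigma H^2-(\nabla u_\varepsilon)^{\!\top}H-H\nabla u_\varepsilon-\nabla^2u_\varepsilon\cdot\nabla L_\varepsilon .
\]
The term $+2\sigma H^2$ has the wrong sign for an upper bound on the largest eigenvalue.
\item \emph{Lower bound on eigenvalues.} The $H^2$ term helps, and the drift terms are $O(|H|+1+|x|)$ by Step 1 and Step 3.
\item \emph{Upper bound on eigenvalues.} Following Hamilton's trick as in \cite{FengWang2026}, I would consider $\lambda_{\max}(H)+A|\nabla L_\varepsilon|^2/\psi$ with $A$ large. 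The $-2\sigma A|H|^2/\psi$ produced by the gradient equation should absorb $2\sigma H^2$, possibly after replacing $\psi$ by a power of $\psi$ to balance the weights.
\item Then divide by $(1+|x|^2)$ and apply the weighted maximum principle as in Step 3. The initial data are controlled by the third assumption in \eqref{eq:initial-reference-assumptions}.
\item The delicate points are making the absorption work with the $|x|^2$ weights, and ensuring the $\nabla^2u_\varepsilon\cdot\nabla L_\varepsilon$ source grows at most linearly. Step 1 provides this linear growth.
\end{itemize}
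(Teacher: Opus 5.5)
You correctly flag Step 4 as the crux, and there is a genuine gap there: the auxiliary function you write down does not close.

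\textbf{Why your Step 4 fails.} For $G=\lambda_{\max}(H)+A|\nabla L_\varepsilon|^2/\psi$, the only term that can cancel the Riccati term is $-2\sigma A|H|^2/\psi$. But $2\sigma\lambda_{\max}^2-2\sigma A|H|^2/\psi\le0$ holds only where $\psi\le A$. Since $\psi\simeq1+|x|^2$, this fails outside a ball of radius $\sim\sqrt A$ for every fixed $A$, so no upper bound follows. A weight $\psi^{-s}$ with $s>0$ has the same defect, and $s<0$ degrades the rate. The weight has to be dropped entirely.

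\textbf{The fix.} With $\mathcal L':=\partial_t+u_\varepsilon\cdot\nabla-\sigma\Delta-2\sigma\nabla L_\varepsilon\cdot\nabla$ one has $\mathcal L'(\nabla L_\varepsilon\otimes\nabla L_\varepsilon)=-2\sigma H^2+(\text{terms linear in }\nabla u_\varepsilon)$, which cancels $+2\sigma H^2$ exactly. Take $G:=\lambda_{\max}(H)+2|\nabla L_\varepsilon|^2$. Using $|H|^2\ge\lambda_{\max}^2$ and Steps 1 and 3, in the barrier sense
\[
\mathcal L'G\le-2\sigma\lambda_{\max}^2+2\|\nabla u_\varepsilon\|_\infty|\lambda_{\max}|+C_T(1+|x|^2)\le C_T(1+|x|^2).
\]
Since $|\nabla L_\varepsilon|\lesssim1+|x|$ gives $\mathcal L'(1+|x|^2)=O(1+|x|^2)$, comparison with $Me^{\beta t}(1+|x|^2)$ yields $\lambda_{\max}(H)\le G\le C_T(1+|x|^2)$. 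Your lower-eigenvalue argument then completes \eqref{eq:uniform-log-hessian-reference}.

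\textbf{Comparison with the paper's route.} Your Steps 1--2 match the paper in substance; you derive the uniform $W^{2,\infty}$ bounds on $u_\varepsilon$ directly rather than citing Ben--Artzi and Feng--Wang, and your explicit-barrier option is the paper's argument. The cancellation above is why the paper never meets a Riccati term: $H+\nabla L_\varepsilon\otimes\nabla L_\varepsilon=\nabla^2\omega_\varepsilon/\omega_\varepsilon$, and $\nabla^2\omega_\varepsilon$ solves a linear equation. The paper works throughout with $\omega_\varepsilon$-weighted quantities: $F_1=\omega_\varepsilon(|\nabla L_\varepsilon|^2+B_1L_\varepsilon-B_2)$, $Q_2=\omega_\varepsilon|\nabla^2\omega_\varepsilon/\omega_\varepsilon|^2$ and $\omega_\varepsilon(\log\omega_\varepsilon)^2$. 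The identity $\mathscr L_\varepsilon(\omega_\varepsilon g)=\omega_\varepsilon\mathcal L'g$ trades your operator, with unbounded drift $u_\varepsilon-2\sigma\nabla L_\varepsilon$, for $\mathscr L_\varepsilon$ with bounded drift. All the paper's auxiliary functions then satisfy linear inequalities $\mathscr L_\varepsilon F\le0$. The whole-space maximum principle needs only the Gaussian-exponential growth that Step 1 and the Gaussian lower bound supply.

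\textbf{Issues in your Step 3.} In your log formulation, justifying the maximum principle at infinity is harder.
\begin{itemize}
\item The a priori growth available for fixed $\varepsilon$ is only $|\nabla^mL_\varepsilon|\le Ce^{C|x|^2}$, $m=1,2$, not polynomial as you assume. So a polynomial penalization does not apply.
\item In Step 3 the drift $2\sigma\nabla L_\varepsilon$ is not yet controlled, so you do need the cutoff localization you mention.
\item The cleanest fix is to run Step 3 on $|\nabla L_\varepsilon|^2+B_1L_\varepsilon$, for which $\mathcal L'(\cdot)\le(2\|\nabla u_\varepsilon\|_\infty-\sigma B_1)|\nabla L_\varepsilon|^2\le0$, and then multiply by $\omega_\varepsilon$. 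This is exactly the paper's $F_1$.
\item Your Bernstein argument is closed by the $-\sigma Q^2$ coming from $\mathcal L'(-L_\varepsilon)=\sigma|\nabla L_\varepsilon|^2$ inside $\psi$. It absorbs the $Q^{3/2}$ term produced when $-2\sigma\nabla L_\varepsilon\cdot\nabla$ hits $b_T|x|^2$. The term $-2\sigma|\nabla^2L_\varepsilon|^2$ does no work and can simply be discarded.
\end{itemize}
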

\begin{proof}
We follow the regularity argument of Ben--Artzi
\cite{ben1994global} and the Hamilton-type estimates of Feng and Wang
\cite{FengWang2026}. The only point that requires verification is that all
constants remain uniform for \(0<\varepsilon\leq1\).

First, the assumptions on \(\omega_0\) imply
$
  \omega_0
  \in
  W^{2,1}(\mathbb R^2)
  \cap
  W^{2,\infty}(\mathbb R^2).
$
Indeed,
\[
  \nabla\omega_0
  =
  \omega_0\nabla\log\omega_0
\]
and
\[
  \nabla^2\omega_0
  =
  \omega_0
  \left(
    \nabla^2\log\omega_0
    +
    \nabla\log\omega_0
    \otimes
    \nabla\log\omega_0
  \right),
\]
so the Gaussian upper bound and
\eqref{eq:initial-reference-assumptions} control the first two derivatives
in \(L^1\cap L^\infty\).

Since \(u_\varepsilon\) is divergence-free, mass is preserved and the
maximum principle gives
\[
  \|\omega_\varepsilon(t)\|_{L^1}=1,
  \qquad
  \|\omega_\varepsilon(t)\|_{L^\infty}
  \leq
  \|\omega_0\|_{L^\infty}.
\]
Moreover, convolution with \(\varphi_\varepsilon\) is contractive in
\(L^1\) and \(L^\infty\), and, in the sense of distributions,
\[
  \nabla^m u_\varepsilon
  =
  K*
  \left(
    \varphi_\varepsilon*
    \nabla^m\omega_\varepsilon
  \right),
  \qquad
  m=0,1,2.
\]
Using the standard Biot--Savart estimate
\[
  \|K*f\|_{L^\infty}
  \leq
  C
  \left(
    \|f\|_{L^1}
    +
    \|f\|_{L^\infty}
  \right),
\]
the finite-time Sobolev argument in
\cite[Lemma~2.2]{FengWang2026}, which is based on the classical
regularity theory of Ben--Artzi \cite{ben1994global}, applies with
\(K*\nabla^m\omega\) replaced by
$
  K*
  \left(
    \varphi_\varepsilon*
    \nabla^m\omega_\varepsilon
  \right).
$
The \(L^1\)- and \(L^\infty\)-contractivity of the mollifier therefore
gives
\begin{equation}
\label{eq:uniform-reference-sobolev-summary}
  \sup_{0<\varepsilon\leq1}
  \sup_{0\leq t\leq T}
  \left[
    \|\omega_\varepsilon(t)\|_{W^{2,1}\cap W^{2,\infty}}
    +
    \|u_\varepsilon(t)\|_{W^{2,\infty}}
  \right]
  \leq
  C_T,
\end{equation}
where $C_T$ is independent of $\varepsilon$.
We next prove the Gaussian bounds. Set
\[
  U_T
  :=
  \sup_{0<\varepsilon\leq1}
  \sup_{0\leq t\leq T}
  \|u_\varepsilon(t)\|_{L^\infty}.
\]
The score bound at \(t=0\), together with the normalization of
\(\omega_0\), implies that there exist \(b_0,c_0>0\), depending only on
the initial constants, such that
$
  \omega_0(x)
  \geq
  c_0e^{-b_0|x|^2}.
$
Indeed, the Gaussian upper bound places a fixed positive amount of mass in
a bounded ball, and the lower estimate follows by integrating
\(\nabla\log\omega_0\) along line segments.

Define $
  a(t)
  :=
  \frac{a_0}{1+8\sigma a_0t}
$
and
\[
  \overline\omega(t,x)
  :=
  C_0
  \exp\left(\frac{U_T^2}{4\sigma}t\right)
  e^{-a(t)|x|^2}.
\]
A direct calculation shows that
\[
  \left(
    \partial_t
    +
    u_\varepsilon\cdot\nabla
    -
    \sigma\Delta
  \right)
  \overline\omega
  \geq0.
\]
Similarly, if
\[
  \underline\omega(t,x)
  :=
  c_0
  \exp\left[
    -
    \left(
      4\sigma b_0
      +
      \frac{U_T^2}{4\sigma}
    \right)t
  \right]
  e^{-b_0|x|^2},
\]
then
\[
  \left(
    \partial_t
    +
    u_\varepsilon\cdot\nabla
    -
    \sigma\Delta
  \right)
  \underline\omega
  \leq0.
\]
The whole-space comparison principle therefore yields
\[
  \underline\omega(t,x)
  \leq
  \omega_\varepsilon(t,x)
  \leq
  \overline\omega(t,x),
  \qquad
  0\leq t\leq T.
\]
This proves \eqref{eq:two-sided-gaussian-reference}, with constants
independent of \(\varepsilon\). In particular,
\begin{equation}
\label{eq:uniform-log-density-growth}
  |\log\omega_\varepsilon(t,x)|
  \leq
  C_T(1+|x|^2).
\end{equation}

It remains to prove the logarithmic derivative estimates. Introduce
\[
  \mathscr L_\varepsilon
  :=
  \partial_t
  +
  u_\varepsilon\cdot\nabla
  -
  \sigma\Delta.
\]
Let
\[
  Q_1
  :=
  \frac{|\nabla\omega_\varepsilon|^2}{\omega_\varepsilon}.
\]
The quotient calculation of
\cite[Lemma~4.1]{FengWang2026}, with the diffusion coefficient
\(\sigma\) restored, gives
\[
  \mathscr L_\varepsilon Q_1
  \leq
  C_TQ_1,
  \qquad
  \mathscr L_\varepsilon(\omega_\varepsilon\log\omega_\varepsilon)
  =
  -\sigma Q_1,
\]
where \(C_T\) is independent of \(\varepsilon\) by
\eqref{eq:uniform-reference-sobolev-summary}. Choose \(B_1,B_2>0\), independent of \(\varepsilon\), such that
$
  \sigma B_1\geq C_T
$
and
$
  |\nabla\log\omega_0|^2
  +
  B_1\log\omega_0
  \leq
  B_2.
$
The latter is possible by the initial score bound and the Gaussian upper
bound. The auxiliary function
\[
  F_1
  :=
  Q_1
  +
  B_1\omega_\varepsilon\log\omega_\varepsilon
  -
  B_2\omega_\varepsilon
\]
then satisfies
\[
  \mathscr L_\varepsilon F_1\leq0,
  \qquad
  F_1(0,\cdot)\leq0.
\]
The Sobolev estimates and the Gaussian lower bound show that
\((F_1)_+\) has at most Gaussian-exponential growth. The standard
whole-space maximum principle therefore gives \(F_1\leq0\). Dividing by
\(\omega_\varepsilon\) and using \eqref{eq:uniform-log-density-growth}, we obtain
\[
  |\nabla\log\omega_\varepsilon(t,x)|^2
  \leq
  C_T(1+|x|^2),
\]
which proves \eqref{eq:uniform-score-reference}.

Finally, set
\[
  Q_2
  :=
  \frac{|\nabla^2\omega_\varepsilon|^2}{\omega_\varepsilon}.
\]
The local quotient calculation in
\cite[Lemma~4.2]{FengWang2026} gives, on the finite interval
\([0,T]\),
\[
  \mathscr L_\varepsilon Q_2
  \leq
  C_TQ_2
  +
  C_TQ_1,
\]
and
\[
  \mathscr L_\varepsilon
  \left[
    \omega_\varepsilon(\log\omega_\varepsilon)^2
  \right]
  =
  -2\sigma(1+\log\omega_\varepsilon)Q_1.
\]
The constants are again uniform in \(\varepsilon\), because the calculation
only involves the bounds on \(\nabla u_\varepsilon\) and
\(\nabla^2u_\varepsilon\) in
\eqref{eq:uniform-reference-sobolev-summary}.

Using the same Hamilton auxiliary function as in
\cite[Theorem~4.4]{FengWang2026}, namely
\[
  F_2
  :=
  e^{-\lambda_Tt}Q_2
  -
  B_5\omega_\varepsilon(\log\omega_\varepsilon)^2
  +
  B_6\omega_\varepsilon\log\omega_\varepsilon
  -
  B_7\omega_\varepsilon,
\]
one may choose \(\lambda_T,B_5,B_6,B_7>0\), independently of
\(\varepsilon\), so that
\[
  \mathscr L_\varepsilon F_2\leq0,
  \qquad
  F_2(0,\cdot)\leq0.
\]
Here the initial inequality follows from
\[
  \frac{\nabla^2\omega_0}{\omega_0}
  =
  \nabla^2\log\omega_0
  +
  \nabla\log\omega_0
  \otimes
  \nabla\log\omega_0
\]
and the Gaussian upper bound. The same whole-space maximum principle gives
\(F_2\leq0\), and hence
\[
  \frac{|\nabla^2\omega_\varepsilon(t,x)|^2}
  {\omega_\varepsilon(t,x)^2}
  \leq
  C_T
  \left(
    1+
    |\log\omega_\varepsilon(t,x)|^2
  \right)
  \leq
  C_T(1+|x|^4).
\]
Therefore,
\[
  \frac{|\nabla^2\omega_\varepsilon(t,x)|}
  {\omega_\varepsilon(t,x)}
  \leq
  C_T(1+|x|^2).
\]
Since
\[
  \nabla^2\log\omega_\varepsilon
  =
  \frac{\nabla^2\omega_\varepsilon}{\omega_\varepsilon}
  -
  \nabla\log\omega_\varepsilon
  \otimes
  \nabla\log\omega_\varepsilon,
\]
the logarithmic gradient estimate proves
\[
  |\nabla^2\log\omega_\varepsilon(t,x)|
  \leq
  C_T(1+|x|^2).
\]
This is \eqref{eq:uniform-log-hessian-reference}.
\end{proof}

\subsection{Mean field Biot--Savart fluctuation on $\mathbb R^2$}
Once again, we recall the large deviation type estimate \cite[Theorem~4]{JabinWang2018}.
\begin{lemma}[Jabin-Wang]\label{theorem:JW}
 Consider any $\phi(x, y)$ satisfying the canceling properties
\[
\int_{\mathbb{R}^2} \phi(x, y) \bar{\rho}(x) \mathrm{d} x=0 \text { for any } y, \quad \int_{\mathbb{R}^2} \phi(x, y) \bar{\rho}(y) \mathrm{d} y=0 \text { for any } x,
\]
and there exists a universal constant $C_{J W}=1600^2+36 e^4$ such that
\[
\gamma=C_{J W}\left(\sup _{q \geq 1} \frac{\left\|\sup _y|\phi(\cdot, y)|\right\|_{L^q}(\bar{\rho} \mathrm{~d} x)}{q}\right)^2<1 .
\]
Then we have
\[
\int_{\mathbb{R}^{2 N}} \bar\rho^{\otimes N} \exp \left(\frac{1}{N} \sum_{i, j=1}^N \phi\left(x_i, x_j\right)\right) \mathrm{d} X^N \leq \frac{2}{1-\gamma}<\infty .
\] 
\end{lemma}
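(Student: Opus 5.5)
The plan is to prove Lemma~\ref{theorem:JW} by expanding the exponential in a power series and estimating each moment, using the two cancellation conditions to discard most terms. Set $\Lambda:=\sup_{q\ge1}q^{-1}\|\psi\|_{L^q(\bar\rho)}$, where $\psi(x):=\sup_y|\phi(x,y)|$, so that $\gamma=C_{JW}\Lambda^2$. Also write
\[
\mathbb E:=\int\cdot\,\bar\rho^{\otimes N}\,\mathrm dX^N,
\qquad
S:=\frac1N\sum_{i,j=1}^N\phi(x_i,x_j).
\]
The first step is to use Cauchy--Schwarz to separate the diagonal terms $\frac1N\sum_i\phi(x_i,x_i)$ from the off-diagonal sum. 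The diagonal part is a normalized sum of i.i.d.\ variables bounded by $\psi(x_i)$. Hence its exponential moment is $\bigl(\int e^{2\psi/N}\bar\rho\bigr)^{N}$, and this stays bounded once $\|\psi\|_{L^q}\le q\Lambda$ is used in the Taylor series together with $q!\ge(q/e)^q$. Everything else then reduces to bounding $\mathbb E\exp(2S_{\mathrm{off}})$, where $S_{\mathrm{off}}$ is the off-diagonal part of $S$.

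Next, expand $\mathbb E\exp(2S_{\mathrm{off}})=\sum_k \frac{2^k}{k!}\,\mathbb E S_{\mathrm{off}}^k$ and treat two regimes of $k$. For large $k$, say $k\ge N$ or a comparable threshold, use the crude pointwise bound $|S_{\mathrm{off}}|\le\sum_i\psi(x_i)$. With Hölder and the moment growth $\|\psi\|_{L^q}\le q\Lambda$, this gives
\[
\mathbb E|S_{\mathrm{off}}|^k\le N^k(k\Lambda)^k .
\]
When $k\gtrsim N$ the factor $N^k/k!$ is compensated by Stirling up to $e^{Ck}$, so the tail is a geometric series in $C\Lambda$, which is summable when $\gamma<1$.

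For small $k$, expand $S_{\mathrm{off}}^k$ as $N^{-k}$ times a sum over $k$ ordered pairs $(i_1,j_1),\dots,(i_k,j_k)$ with $i_l\ne j_l$. The key observation concerns an index that occurs in exactly one slot among all $2k$ slots. Integrating over that variable against $\bar\rho$ kills the whole term, by the first or second cancellation property depending on whether the slot is a first or a second argument. So in every surviving multi-index each label occurs at least twice, which means there are at most $k$ distinct labels. The number of such configurations is at most $N^{k}\,(Ck)^{k}$, obtained by choosing $m\le k$ labels and then a surjection of the $2k$ slots onto them. Each term is bounded by Hölder using $\prod\psi(x_{i_l})$, with each variable raised to a power equal to its multiplicity. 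The independence of distinct labels and $\|\psi\|_{L^q}\le q\Lambda$ then give a contribution of order $(C k\Lambda^2)^{k}\cdot k^{-k}\cdot(\text{multiplicity factors})$. After dividing by $k!$, this sums to $\sum_k(C\Lambda^2)^{k/2}$-type geometric bounds. This is where the explicit $C_{JW}$ is to be tracked so that the total is $\le(1-\gamma)^{-1}$ for each of the two pieces, giving the product bound $2/(1-\gamma)$.

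The main obstacle is the combinatorial estimate in the small-$k$ regime. The difficulty is to bound the number of surviving multi-indices, weighted by the product of the $\psi$-moments (which grow like $q^q$ in the multiplicities), sharply enough that the $N^{-k}$ normalization is exactly absorbed and only a factor $(C\Lambda^2)^{k/2}$-like remains. I would first organize the sum by the multiplicity profile $(m_1,\dots,m_r)$ with $\sum_l m_l=2k$ and $m_l\ge2$. For each profile I would bound the Hölder product by $\prod_l (m_l\Lambda)^{m_l}$ and compare it against the multinomial count. The condition $m_l\ge2$, which forces $r\le k$, is what makes the powers of $N$ match.
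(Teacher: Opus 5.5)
The paper gives no proof of this lemma; it quotes Theorem~4 of \cite{JabinWang2018}. Your overall route (power series, with cancellations forcing every label to occur at least twice) is the natural one. However, your high-order step fails as written, and your small-$k$ count is left open with an inadmissible weighting.

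\textbf{High-order step.} From $|S_{\mathrm{off}}|\le\sum_i\psi(x_i)$ and Minkowski you get $\mathbb E|S_{\mathrm{off}}|^k\le N^k(k\Lambda)^k$. So the $k$-th term of $\sum_k\frac{2^k}{k!}\mathbb E|S_{\mathrm{off}}|^k$ is only bounded by $\frac{(2kN\Lambda)^k}{k!}\ge(2N\Lambda)^k$. For $N\ge 1/(2\Lambda)$ the bounding series therefore diverges. A single $k!$ can absorb either the moment growth $k^k$ or the Minkowski factor $N^k$, but not both. Using $N^k/k!\le e^k$ for $k\ge N$ leaves $(2ek\Lambda)^k$, which is not geometric. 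The loss comes from Minkowski, which ignores independence. Expanding multinomially and using $\mathbb E\psi^a\le(a\Lambda)^a\le a!\,(e\Lambda)^a$ instead gives
\[
\mathbb E\Bigl(\sum_{i=1}^N\psi(x_i)\Bigr)^k=\sum_{|\alpha|=k}\frac{k!}{\alpha_1!\cdots\alpha_N!}\prod_{i=1}^N\mathbb E\,\psi^{\alpha_i}\le k!\,(e\Lambda)^k\binom{k+N-1}{N-1}\le k!\,(4e\Lambda)^k\qquad(k\ge N),
\]
and this is a genuinely geometric tail.

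\textbf{Small-$k$ count.} This paragraph leaves exactly the weighted count open, and the weighting you propose is not available. Only $\sup_y|\phi(\cdot,y)|$ is controlled, so each factor must be bounded by $\psi$ of its \emph{first} argument. The correct weight is $\prod_a(n_a\Lambda)^{n_a}$, where $n_a$ is the number of first slots carrying label $a$ and $\sum_a n_a=k$. A bound $\prod_l(m_l\Lambda)^{m_l}$ in total multiplicities would need control of $\sup_x|\phi(x,\cdot)|$, which is not assumed. This is also why your powers $\Lambda^{2k}$ and $\Lambda^k$ do not match.

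With the correct weights, the count closes by the same factorial cancellation. Fix $m$ distinct labels, count labelled pairs of maps (first slots, second slots) $\to\{1,\dots,m\}$, and divide by $m!$.
\begin{itemize}
\item The first-slot maps contribute $\sum_{|n|=k}\frac{k!}{\prod_a n_a!}\prod_a n_a^{n_a}\le k!\,e^k\binom{k+m-1}{m-1}$.
\item The second-slot maps contribute at most $m^k\le e^m m!\,m^{k-m}$.
\item The labels contribute at most $N^m$.
\end{itemize}
Cancellation forces $m\le k$, and trivially $m\le N$, so $N^{m-k}m^{k-m}\le1$. This yields $|\mathbb E S_{\mathrm{off}}^k|\le k\,k!\,(4e^2\Lambda)^k$ for \emph{every} $k$. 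Interchanging sum and expectation is legitimate because $\mathbb E\,e^{2\sum_i\psi(x_i)}<\infty$. So once the count is done this way, the separate large-$k$ regime is unnecessary.

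\textbf{Constants.} Since $\gamma<1$ forces $\Lambda<C_{JW}^{-1/2}<10^{-3}$, the resulting series $1+\sum_k k(8e^2\Lambda)^k$, together with your Cauchy--Schwarz diagonal bound, gives a total below $2\le 2/(1-\gamma)$. The stated constants then follow without any delicate tracking.
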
 

\begin{lemma}[mean field Biot--Savart fluctuation on $\mathbb R^2$]
\label{lem:mean-field-whole-space}
Assume that $\varphi$ is even and that, uniformly for
$0<\varepsilon\leq1$ and $0\leq t\leq T$,
\begin{equation}
\label{eq:blob-structural-bounds}
  K_\varepsilon(-z)
  =
  -K_\varepsilon(z),
  \qquad
  \operatorname{div}K_\varepsilon
  =
  0,
  \qquad
  |K_\varepsilon(z)|
  \leq
  \frac{C}{|z|+\varepsilon},
  \qquad
  z\in\mathbb R^2.
\end{equation}
Assume also that $\omega_\varepsilon(t,\cdot)$ is a strictly positive
probability density and that there exist constants $a_T,C_T>0$, independent
of $\varepsilon$, such that
\begin{equation}
\label{eq:whole-space-reference-estimates}
\begin{aligned}
  \omega_\varepsilon(t,x)
  &\leq
  C_Te^{-a_T|x|^2},
  \\
  |\nabla\log\omega_\varepsilon(t,x)|
  &\leq
  C_T(1+|x|),
  \\
  |\nabla^2\log\omega_\varepsilon(t,x)|
  &\leq
  C_T(1+|x|^2)
\end{aligned}
\qquad
x\in\mathbb R^2.
\end{equation}
Then there exists a constant, still denoted by $C_T$, independent of
$N,\tau,$ and $\varepsilon$, such that
\begin{equation}
\label{eq:mean-field-whole-space-estimate}
  \mathcal R_{\mathrm{mf}}(t)
  \leq
  C_T
  \left[
  \mathcal H_N
  \bigl(
  \tilde F^N_\varepsilon(t)
  \mid
  \omega_\varepsilon(t)^{\otimes N}
  \bigr)
  +
  \frac1N
  \right],
  \qquad
  0\leq t\leq T.
\end{equation}
\end{lemma}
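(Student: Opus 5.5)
Starting from the score form \eqref{eq:R-mf-score}, I would rewrite $\mathcal R_{\mathrm{mf}}$ as an integral of an empirical double sum against $\tilde F^N_\varepsilon$. Write $\bar\omega:=\omega_\varepsilon(t)$ and $s:=\nabla\log\bar\omega$. Then
\[
\mathcal R_{\mathrm{mf}}=-\int \tilde F^N_\varepsilon\,\frac{1}{N(N-1)}\sum_{i\ne j}\Bigl[K_\varepsilon(x_i-x_j)-(K_\varepsilon*\bar\omega)(x_i)\Bigr]\cdot s(x_i)\,\mathrm dX^N .
\]
Because $K_\varepsilon$ is odd, I would symmetrize in $(i,j)$. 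This gives $K_\varepsilon(x_i-x_j)\cdot s(x_i)\mapsto \tfrac12 K_\varepsilon(x_i-x_j)\cdot(s(x_i)-s(x_j))$. I would then define the two-point function
\[
\psi(x,y):=\tfrac12K_\varepsilon(x-y)\cdot\bigl(s(x)-s(y)\bigr)-(K_\varepsilon*\bar\omega)(x)\cdot s(x)\ \ (\text{symmetrized}),
\]
and recenter it as $\phi=\psi-\int\psi\,\bar\omega\,\mathrm dx-\int\psi\,\bar\omega\,\mathrm dy+\iint\psi\,\bar\omega\bar\omega$, so that the cancellation hypotheses of Lemma~\ref{theorem:JW} hold.

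I would check that the correction terms vanish or are harmless. First, $\int K_\varepsilon(x-y)\cdot s(y)\bar\omega(y)\,\mathrm dy=\int K_\varepsilon(x-y)\cdot\nabla\bar\omega(y)\,\mathrm dy=0$ by $\operatorname{div}K_\varepsilon=0$ and integration by parts. Second, $\int (K_\varepsilon*\bar\omega)\cdot\nabla\bar\omega=0$ because $K_\varepsilon*\bar\omega$ is divergence free. So the $x$-average of $\psi$ matches the mean-field term, and $\iint\psi\,\bar\omega\bar\omega=0$. The diagonal $i=j$ terms and the $\frac{1}{N-1}$ versus $\frac1N$ discrepancy cost $O(1/N)$ times $\|\phi\|$ evaluated at coincident points. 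At $x=y$ the pair part vanishes, and what is left is a one-body term with Gaussian moments. I would control it by Donsker--Varadhan against $\bar\omega$, which produces $\mathcal H_N+C/N$.

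Next, the Gibbs/Donsker--Varadhan inequality gives, for $\eta>0$,
\[
|\mathcal R_{\mathrm{mf}}|\le \frac1\eta\Bigl(\mathcal H_N+\frac1N\log\int\bar\omega^{\otimes N}\exp\Bigl(\frac{\eta}{N}\sum_{i,j}\phi(x_i,x_j)\Bigr)\Bigr)+\frac{C}{N}.
\]
Lemma~\ref{theorem:JW}, applied to $\eta\phi$, bounds the exponential integral by a constant once $\gamma<1$. This needs $\sup_{q}\|\sup_y|\eta\phi(\cdot,y)|\|_{L^q(\bar\omega)}/q$ small, and then $\mathcal R_{\mathrm{mf}}\le C_T(\mathcal H_N+1/N)$ follows.

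The main obstacle is the uniform-in-$\varepsilon$ bound on $\sup_y|\phi(x,y)|$, which has to be at most polynomial in $|x|$. That is what makes the $L^q(\bar\omega)$ norms grow like $O(q)$ via the Gaussian tail \eqref{eq:whole-space-reference-estimates}. For $|x-y|\le 1$, I would use the mean value theorem with the Hessian bound \eqref{eq:uniform-log-hessian-reference}: $|s(x)-s(y)|\le C(1+|x|^2+|y|^2)|x-y|$. Combined with $|K_\varepsilon(z)||z|\le C$, this cancels the singularity and gives $|\psi|\le C(1+|x|^2+|y|^2)$. For $|x-y|\ge1$, $|K_\varepsilon|\le C$ and the score bound give $|\psi|\le C(1+|x|+|y|)$.

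The difficulty is that the sup over $y$ is unbounded in $|y|$. To handle it, I would exploit the symmetric role of $x$ and $y$. In the far region, $|K_\varepsilon(x-y)|\le C/|x-y|$ and $|s(x)-s(y)|\le C(1+|x|+|y|)$, so the pair term is $O(1+|x|)$ uniformly in $y$ whenever $|y|\ge2|x|+2$. Near the diagonal, $|y|\le |x|+1$. In both cases $\sup_y|\psi|\le C(1+|x|^2)$, and the recentering terms share this bound. Then $\|\sup_y|\phi|\|_{L^q(\bar\omega)}\le C\sqrt q$ by Gaussian moments, so $\sup_q(\cdot)/q<\infty$ and I can choose $\eta=\eta_T$ small, independent of $\varepsilon$ and $N$.
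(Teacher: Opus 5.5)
Your architecture matches the paper's: odd symmetrization, a canonical two-point kernel with the Jabin--Wang cancellations, Donsker--Varadhan combined with Lemma~\ref{theorem:JW}, and the mean-value/Hessian argument that removes the near-diagonal singularity. There is, however, a genuine gap in the step you flag as the main obstacle, because you carry it out only for the pair part of the kernel. Write $u:=K_\varepsilon*\bar\omega$. Your recentering gives $\int\psi\,\bar\omega\,\mathrm dx=\tfrac12u(y)\cdot s(y)$ and $\int\psi\,\bar\omega\,\mathrm dy=-\tfrac12u(x)\cdot s(x)$, so it reproduces exactly the paper's $\Phi_\varepsilon$. That kernel contains the one-body term $-\tfrac12u(y)\cdot s(y)$. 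Lemma~\ref{theorem:JW} needs $\sup_y|\phi(x,y)|$ to be finite, so this term must be bounded uniformly in $y$. The bounds you invoke, $\|u\|_{L^\infty}\le C$ and $|s(y)|\le C_T(1+|y|)$, give only $|u(y)\cdot s(y)|\le C_T(1+|y|)$, whose supremum over $y$ is infinite. Your sentence ``the recentering terms share this bound'' is therefore unjustified, and the Orlicz hypothesis of the Jabin--Wang lemma is not verified.

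The missing ingredient is the uniform velocity decay
\[
|(K_\varepsilon*\omega_\varepsilon)(x)|\le \frac{C_T}{1+|x|}.
\]
The paper derives it from $|K_\varepsilon(z)|\le C/|z|$, unit mass, and the Gaussian upper bound. For $|x|>4$ it splits the convolution into three regions:
\begin{itemize}
\item $\{|y|\le|x|/2\}$, which contributes $C/|x|$;
\item $\{|y|>|x|/2,\ |x-y|\le1\}$, handled by local integrability of $1/|z|$ times the Gaussian bound on $\omega_\varepsilon$;
\item $\{|y|>|x|/2,\ |x-y|>1\}$, handled by the Gaussian tail.
\end{itemize}
This yields $\sup_x|u\cdot s|\le C_T$, and only then $\sup_y|\Phi_\varepsilon(x,y)|\le C_T(1+|x|^2)$ uniformly in $\varepsilon$.

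Two minor points. First, a quadratic weight against a Gaussian gives $L^q(\bar\omega)$ norms growing like $q$, not $\sqrt q$. This is harmless, since the lemma only needs $\sup_q(\cdot)/q<\infty$. Second, your treatment of the diagonal and of the $\frac1{N-1}$ versus $\frac1N$ discrepancy (via Donsker--Varadhan and second moments) works, but it is unnecessary. The paper sets $\Phi_\varepsilon(x,x):=0$, which is a null set for the cancellation conditions, and absorbs $N/(N-1)\le2$ into the choice of $\eta_T$.
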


\begin{proof}
By \eqref{eq:R-mf-score},
\begin{align}
\label{eq:R-mf-before-symmetrization}
\mathcal R_{\mathrm{mf}}(t)
=
-\int_{\mathbb R^{2N}}
\tilde F^N_\varepsilon(t,X^N)
\Bigg\{
&
\frac1{N(N-1)}
\sum_{i\ne j}
K_\varepsilon(x_i-x_j)
\cdot
\nabla\log\omega_\varepsilon(x_i)
\notag\\
&-
\frac1N
\sum_{i=1}^N
(K_\varepsilon*\omega_\varepsilon)(x_i)
\cdot
\nabla\log\omega_\varepsilon(x_i)
\Bigg\}
\,\mathrm dX^N.
\end{align}
We follow a decomposition similar to that in \cite[Section~5]{FengWang2026}. The first term in the integrand of \eqref{eq:R-mf-before-symmetrization} is a two-particle average, while the second term is a one-particle average. One can symmetrize the two-particle average and rewrite the one-particle average as a two-particle average by
using the oddness of $K_\varepsilon$ and exchanging $i$ and $j$,
\begin{align}
\label{eq:odd-symmetrization}
&
\frac1{N(N-1)}
\sum_{i\ne j}
K_\varepsilon(x_i-x_j)
\cdot
\nabla\log\omega_\varepsilon(x_i)
\notag\\
&\qquad
=
\frac1{2N(N-1)}
\sum_{i\ne j}
K_\varepsilon(x_i-x_j)
\cdot
\bigl(
\nabla\log\omega_\varepsilon(x_i)
-
\nabla\log\omega_\varepsilon(x_j)
\bigr).
\end{align}
Moreover,
\begin{align}
\label{eq:one-particle-off-diagonal}
&
\frac1N
\sum_{i=1}^N
(K_\varepsilon*\omega_\varepsilon)(x_i)
\cdot
\nabla\log\omega_\varepsilon(x_i)
\notag\\
&\qquad
=
\frac1{2N(N-1)}
\sum_{i\ne j}
\Bigl[
(K_\varepsilon*\omega_\varepsilon)(x_i)
\cdot
\nabla\log\omega_\varepsilon(x_i)
+
(K_\varepsilon*\omega_\varepsilon)(x_j)
\cdot
\nabla\log\omega_\varepsilon(x_j)
\Bigr].
\end{align}
For $x\ne y$, define
\begin{align}
\label{eq:canonical-kernel-definition}
\Phi_\varepsilon(t,x,y)
:={}&
\frac12
K_\varepsilon(x-y)
\cdot
\bigl[
\nabla\log\omega_\varepsilon(x)
-
\nabla\log\omega_\varepsilon(y)
\bigr]
\notag\\
&-
\frac12
(K_\varepsilon*\omega_\varepsilon)(x)
\cdot
\nabla\log\omega_\varepsilon(x)
-
\frac12
(K_\varepsilon*\omega_\varepsilon)(y)
\cdot
\nabla\log\omega_\varepsilon(y),
\end{align}
and set
\[
  \Phi_\varepsilon(t,x,x)
  :=
  0.
\]
Then
\eqref{eq:R-mf-before-symmetrization}--
\eqref{eq:one-particle-off-diagonal}
give
\begin{equation}
\label{eq:R-mf-canonical-average}
  \mathcal R_{\mathrm{mf}}(t)
  =
  -
  \int_{\mathbb R^{2N}}
  \tilde F^N_\varepsilon(t,X^N)
  \frac1{N(N-1)}
  \sum_{i\ne j}
  \Phi_\varepsilon(t,x_i,x_j)
  \,\mathrm dX^N.
\end{equation}
It is straightforward to verify that $\Phi_\varepsilon(t, x, y)$ satisfies the cancellation condition:
\begin{equation}
\label{eq:canonical-cancellation}
  \int_{\mathbb R^2}
  \Phi_\varepsilon(t,x,y)
  \omega_\varepsilon(t,x)
  \,\mathrm dx
  =
  0
  \quad
  \text{for every }y; \qquad   \int_{\mathbb R^2}
  \Phi_\varepsilon(t,x,y)
  \omega_\varepsilon(t,y)
  \,\mathrm dy
  =
  0
  \quad
  \text{for every }x.
\end{equation}
Now one can establish the growth bound needed for the whole-space canonical
large-deviation estimate. First, as in 
\cite{FengWang2026}, one can claim that
\begin{equation}
\label{eq:velocity-decay}
  |(K_\varepsilon*\omega_\varepsilon)(x)|
  \leq
  \frac{C_T}{1+|x|},
\end{equation} where $C_T$ is independent of $\varepsilon$. Indeed, 
for $|x|\leq4$ using \eqref{eq:blob-structural-bounds},
\eqref{eq:whole-space-reference-estimates}, and
$\int\omega_\varepsilon=1$, one has
\begin{align*}
|(K_\varepsilon*\omega_\varepsilon)(x)|
&\leq
\int_{\mathbb R^2}
\frac{C}{
|x-y|+\varepsilon
}
\omega_\varepsilon(t,y)
\,\mathrm dy
\\
&\leq
C
\|\omega_\varepsilon(t)\|_{L^\infty}
\int_{|z|\leq1}
\frac{\,\mathrm dz}{|z|}
+
C
\int_{|z|>1}
\omega_\varepsilon(t,x-z)
\,\mathrm dz
\\
&\leq
C_T.
\end{align*} 
For $|x|>4$, split the integral defining $(K_\varepsilon*\omega_\varepsilon)(x)$ into
\[
  \{|y|\leq |x|/2\},
  \qquad
  \{|y|>|x|/2,\ |x-y|\leq1\},
  \qquad
  \{|y|>|x|/2,\ |x-y|>1\}.
\]
For the first region $
  |x-y|
  \geq
  \frac{|x|}{2}$,
and hence
\begin{align*}
\int_{\{|y|\leq|x|/2\}}
|K_\varepsilon(x-y)|
\omega_\varepsilon(t,y)
\,\mathrm dy
\leq
\frac{C}{|x|}
\int_{\mathbb R^2}
\omega_\varepsilon(t,y)
\,\mathrm dy
=
\frac{C}{|x|}.
\end{align*}
On the second region, $|x-y|\leq1$ and $|x|>4$ imply $
  |y|
  \geq
  |x|-1
  \geq
  \frac{|x|}{2}$. Consequently,
\begin{align*}
\int_{\substack{|y|>|x|/2\\|x-y|\leq1}}
|K_\varepsilon(x-y)|
\omega_\varepsilon(t,y)
\,\mathrm dy
\leq
C_T
e^{-a_T|x|^2/4}
\int_{|z|\leq1}
\frac{\,\mathrm dz}{|z|}
\leq
C_Te^{-a_T|x|^2/4}.
\end{align*}
Finally, on the third region $|x-y|>1$, and hence
\begin{align*}
\int_{\substack{|y|>|x|/2\\|x-y|>1}}
|K_\varepsilon(x-y)|
\omega_\varepsilon(t,y)
\,\mathrm dy
\leq
C
\int_{|y|>|x|/2}
\omega_\varepsilon(t,y)
\,\mathrm dy
\leq
C_T
\int_{|y|>|x|/2}
e^{-a_T|y|^2}
\,\mathrm dy
\leq
C_Te^{-a_T|x|^2/8}.
\end{align*}
This proves \eqref{eq:velocity-decay}. Combining it with the score estimate
in \eqref{eq:whole-space-reference-estimates}, one obtains
\begin{equation}
\label{eq:velocity-score-bound}
  \sup_{x\in\mathbb R^2}
  |
  (K_\varepsilon*\omega_\varepsilon)(x)
  \cdot
  \nabla\log\omega_\varepsilon(x)
  |
  \leq
  C_T,
\end{equation} 
where $C_T$ is independent of $\varepsilon$.
Next one can estimate the first term in 
\eqref{eq:canonical-kernel-definition}. Let $r
  :=
  |x-y|.$ If $r\leq1$, then by the mean-value formula
\begin{align*}
|K_\varepsilon(x-y)|
\,
|\nabla\log\omega_\varepsilon(x)-\nabla\log\omega_\varepsilon(y)|
\leq
&C
\frac{r}{r+\varepsilon}
\int_0^1
\left|
\nabla^2
\log
\omega_\varepsilon
\bigl(
t,y+\theta(x-y)
\bigr)
\right|
\,\mathrm d\theta
\\
\leq &
  C_T(1+|x|^2).
\end{align*}
If $r>1$, then $|K_\varepsilon(x-y)|
  \leq
  \frac{C}{r} $.
Thus
\begin{align}
\label{eq:far-score-difference}
|K_\varepsilon(x-y)|
\,
|\nabla\log\omega_\varepsilon(x)-\nabla\log\omega_\varepsilon(y)|
\leq
\frac{C_T}{r}
\bigl(
2+|x|+|y|
\bigr)
\leq
C_T(1+|x|).
\end{align}
Combining
\eqref{eq:velocity-score-bound} and
\eqref{eq:far-score-difference},
\begin{equation}
\label{eq:canonical-kernel-growth}
  \sup_{y\in\mathbb R^2}
  |
  \Phi_\varepsilon(t,x,y)
  |
  \leq
  C_T(1+|x|^2),
\end{equation} where $C_T$ is independent of $\varepsilon$.
The Gaussian upper bound now yields the required Orlicz estimate. Indeed,
\[
  \int_{\mathbb R^2}
  |x|^{2q}\omega_\varepsilon(t,x)
  \,\mathrm dx
  \leq
  C_T
  \int_0^\infty
  r^{2q+1}e^{-a_Tr^2}
  \,\mathrm dr
  =
  C_T'a_T^{-(q+1)}\Gamma(q+1),
\]
and hence
\begin{equation}
\label{eq:canonical-Orlicz-bound}
  \Lambda_T
  :=
  \sup_{q\geq1}
  \frac1q
  \left\|
    \sup_y|\Phi_\varepsilon(t,\cdot,y)|
  \right\|_{L^q(\omega_\varepsilon(t,x)\,\mathrm dx)}
  \leq
  C_T.
\end{equation}
We finally apply the exponential large deviation type estimate. Let $C_{\mathrm{JW}}$ be
the universal constant in Lemma \ref{theorem:JW}.
Choose $\eta_T>0$ which is independent of $N$ and $\varepsilon$ such that
$
  C_{\mathrm{JW}}(2\eta_T\Lambda_T)^2
  \leq
  \frac12.
$
Lemma \ref{theorem:JW} then yields
\begin{equation}
\label{eq:canonical-exponential-bound-applied}
  \int_{\mathbb R^{2N}}
  \exp\left(
    \frac{\eta_T}{N-1}
    \sum_{i\neq j}
    \Phi_\varepsilon(t,x_i,x_j)
  \right)
  \omega_\varepsilon^{\otimes N}
  \,\mathrm dX^N
  \leq
  4.
\end{equation}
Set 
\[
\Psi_{N,\eps}(t,X^N)=\frac{1}{N(N-1)}\sum_{i\neq j} \Phi_\varepsilon(t,x_i,x_j).
\]
Recall the Donsker–Varadhan inequality as in \cite[Lemma~1]{JabinWang2018} with parameter $\eta_T$, we have
\begin{align*}
\cR_{mf}&\leq
  \frac1{\eta_T}
\cH_N\bigl(
    \tilde F^N_\varepsilon
    \mid
    \omega_\varepsilon^{\otimes N}
  \bigr)+
  \frac1{N\eta_T}
  \log
  \int_{\mathbb R^{2N}}
  \exp(N\eta_T \Psi_{N,\eps})
  \omega_\varepsilon^{\otimes N}
  \,\mathrm dX^N
  \\
  &\leq
  \frac1{\eta_T}
  \mathcal \cH_N\bigl(
    \tilde F^N_\varepsilon
    \mid
    \omega_\varepsilon^{\otimes N}
  \bigr)
  +
  \frac{\log4}{N\eta_T}
  \\
  &\leq
  C_T
  \left[
    \mathcal H_N\bigl(
      \tilde F^N_\varepsilon
      \mid
      \omega_\varepsilon^{\otimes N}
    \bigr)
    +
    \frac1N
  \right],
\end{align*}
which concludes the proof.
\end{proof}

\section{Proof of the main theorem}
\label{sec:proof-main}

We now combine the relative entropy identity \eqref{eq:entropy-evolution-rbm-vortex} with the random batch and mean field
estimates obtained above.

\begin{proof}[Proof of Theorem~\ref{thm:main}]

The assumptions on $\omega_0$ and Proposition~\ref{prop:uniform-reference-estimates}
provide the reference estimates required in
Lemma~\ref{lem:mean-field-whole-space}, with constants uniform in
$0<\varepsilon\leq1$. Moreover, the Gaussian upper bound in
\eqref{eq:main-initial-assumptions} gives an exponential second moment, while
the score bound and strict positivity imply
$-\log\omega_0(x)\leq C(1+|x|^2)$ by integrating
$\nabla\log\omega_0$ along a line segment from a point where
$\omega_0$ is bounded below. Hence the relative-initial-data form of
Proposition~\ref{prop:rbm-integrated-estimate} applies.

The relative entropy identity \eqref{eq:entropy-evolution-rbm-vortex} in
Proposition~\ref{prop:entropy-evolution-rbm-vortex} holds on every interval
$(t_k,t_{k+1})$. Since the particle law is continuous at the switching times,
summing these identities and integrating from $0$ to $t\leq T$ gives
\begin{equation}
\label{eq:main-proof-start}
    \mathcal H_N\!\left(
    \tilde F_\varepsilon^N(t)
    \,\middle|\,
    \omega_\varepsilon(t)^{\otimes N}
  \right)
  +
  \sigma\int_0^t\mathcal I_N(s)\,\mathrm ds
  =
  h_N^0
  +
  \int_0^t\mathcal R_{\mathrm{RBM}}(s)\,\mathrm ds
  +
  \int_0^t\mathcal R_{\mathrm{mf}}(s)\,\mathrm ds.
\end{equation}
By Proposition~\ref{prop:rbm-integrated-estimate}, uniformly for
$t\in[0,T]$,
\begin{equation}
\label{eq:main-proof-rbm}
  \int_0^t
  |\mathcal R_{\mathrm{RBM}}(s)|\,\mathrm ds
  \leq
  \frac{\sigma}{8}
  \int_0^t\mathcal I_N(s)\,\mathrm ds
  +
  C_T\varepsilon^{-4}\tau^2(1+h_N^0).
\end{equation}
On the other hand, Lemma~\ref{lem:mean-field-whole-space} yields
\begin{equation}
\label{eq:main-proof-mf}
  \int_0^t
  |\mathcal R_{\mathrm{mf}}(s)|\,\mathrm ds
  \leq
  C_T\int_0^t h_N(s)\,\mathrm ds
  +
  \frac{C_T}{N}.
\end{equation}
Substituting \eqref{eq:main-proof-rbm} and
\eqref{eq:main-proof-mf} into \eqref{eq:main-proof-start}, we obtain
\begin{align*}
\label{eq:main-proof-gronwall}
  &\mathcal H_N\!\left(
    \tilde F_\varepsilon^N(t)
    \,\middle|\,
    \omega_\varepsilon(t)^{\otimes N}
  \right)
  +
  \frac{7\sigma}{8}
  \int_0^t\mathcal I_N(s)\,\mathrm ds\\
  \notag
  \leq &
  h_N^0
  +
  C_T\varepsilon^{-4}\tau^2(1+h_N^0)
  +
  \frac{C_T}{N}
  +
  C_T\int_0^t   \mathcal H_N\!\left(
    \tilde F_\varepsilon^N(s)
    \,\middle|\,
    \omega_\varepsilon(s)^{\otimes N}
  \right)\,\mathrm ds.
\end{align*}
Gronwall's inequality therefore gives
\[
  \sup_{0\leq t\leq T}  \mathcal H_N\!\left(
    \tilde F_\varepsilon^N(t)
    \,\middle|\,
    \omega_\varepsilon(t)^{\otimes N}
  \right)
  \leq
  C_T\left[
    h_N^0
    +
    \varepsilon^{-4}\tau^2(1+h_N^0)
    +
    \frac1N
  \right].
\]
This proves \eqref{main-result}. If
$\tilde F_\varepsilon^N(0)=\omega_0^{\otimes N}$, then $h_N^0=0$, and
\eqref{main-result-product} follows immediately.
\end{proof}
\section*{Acknowledgements}
The author is grateful to Professor Shi Jin and Professor Lei Li for their valuable guidance and insightful discussions throughout this work. This work was supported by the National Natural Science Foundation of China (Grant No.~12531016), the National Natural Science Foundation of China Tianyuan Fund for Mathematics (Grant No.~12426304), and the Shanghai Municipal Science and Technology Key Project (Grant No.~23JC1402300).
\bibliographystyle{plain}
\bibliography{rbmvortexref}

\end{document}